\newtheorem{theorem}{Theorem}[section]
\newtheorem{corollary}[theorem]{Corollary}
\newtheorem{definition}[theorem]{Definition}
\newtheorem{conjecture}[theorem]{Conjecture}
\newtheorem{lemma}[theorem]{Lemma}
\newtheorem{proposition}[theorem]{Proposition}
\newtheorem{claim}{Claim}
\newenvironment{proof}{\noindent {\bf
Proof.}}{\rule{3mm}{3mm}\par\medskip}
\newcommand{\ENDproof}{~\rule{3mm}{3mm}\medskip\par}
\newcommand{\JCT}{{\it J. Combin. Theory}, }
\newcommand{\ComHung}{{\it Combinatorica}, }
\newcommand{\CJM}{{\it Canad. J. Math.}, }
\begin{document}

\title{On $3$-flow-critical graphs}

\author{
 Jiaao Li$^1$, Yulai Ma$^2$\thanks{Corresponding author.}, Yongtang Shi$^2$, Weifan Wang$^3$, Yezhou Wu$^4$\\
\small $^1$School of Mathematical Sciences and LPMC, Nankai University, Tianjin 300071, China\\
\small $^2$Center for Combinatorics and LPMC, Nankai University,  Tianjin 300071, China\\
\small $^3$Department of Mathematics, Zhejiang Normal University, Jinhua, Zhejiang 321004, China\\
\small $^4$Ocean College, Zhejiang University, Zhoushan, Zhejiang 316021, China\\
\small Emails: lijiaao@nankai.edu.cn; ylma92@163.com; shi@nankai.edu.cn;\\
\small  wwf@zjnu.cn; yezhouwu@zju.edu.cn
}

\date{}

\maketitle

   \begin{abstract}
   A  bridgeless graph $G$ is called {\em $3$-flow-critical} if it does not admit a nowhere-zero $3$-flow, but $G/e$ has  for any $e\in E(G)$. Tutte's $3$-flow conjecture can be equivalently stated as that every $3$-flow-critical graph contains a vertex of degree three. In this paper, we study the structure and extreme  edge density of $3$-flow-critical graphs.   We apply structure properties to obtain lower and upper bounds on the density of  $3$-flow-critical  graphs, that is, for any $3$-flow-critical graph $G$ on $n$ vertices,
$$\frac{8n-2}{5}\le |E(G)|\le 4n-10,$$
where each equality holds if and only if $G$ is $K_4$. We conjecture that every $3$-flow-critical graph on  $n\ge 7$ vertices has at most $3n-8$ edges, which would be tight if true. For planar graphs, the best possible density upper bound of $3$-flow-critical graphs on $n$ vertices is $\frac{5n-8}{2}$, known from a result of Kostochka and Yancey (JCTB 2014) on vertex coloring $4$-critical graphs by duality.
\\[2mm]
\noindent\textbf{Keywords:}  nowhere-zero flows; $3$-flow conjecture;   critical graph; group connectivity
   \end{abstract}
\section{Introduction}

Graphs in this paper are finite and may contain parallel edges but no loops. We follow \cite{BoMu08, Zhang1997} for undefined notation and terminology. A vertex of degree $k$ in a graph $G$ is called a $k$-vertex. Denote by $V_{k}(G)$ ($V_{\le k}(G)$ and $V_{\ge k}(G)$, respectively)  the set of all vertices of degree $k$ (at most $k$ and  at least $k$, respectively) in $G$.

Let $D=D(G)$ be an orientation of $G$. For a vertex pair $(u,v)$, denote  by $u\rightarrow v$ if there is an arc leaving $u$ and entering $v$.
For each $v\in V(G)$, we use $E^+_D(v)$ and $E^-_D(v)$ to denote the set of all arcs
directed out from $v$ and directed into $v$, respectively. An ordered pair $(D, f)$ is called an {\em integer flow} of $G$ if $D$ is an orientation and $f$ is a mapping from $E(G)$ to integers  such that every vertex $v\in V(G)$ is balanced, that is
$\sum_{e \in E^+_D(v)} f(e) - \sum_{e \in E^-_D(v)} f(e) =0.$
An integer flow $(D, f)$ is called a  {\em nowhere-zero $k$-flow} if $1\le|f(e)|\le k-1$,   $\forall e\in E(G)$.


As observed by Tutte \cite{Tutte54, Tutte66}, flow and coloring are dual concepts: a plane graph $G$ admits a nowhere-zero $k$-flow if and only if the dual graph $G^*$ is $k$-colorable. A graph $G$ is called vertex coloring {\em $4$-critical} if $G$ is not  $3$-colorable  but deleting any edge in $G$ results a $3$-colorable graph. Motivated by this, we define a bridgeless graph $G$ to be {\em $3$-flow-critical} if $G$ admits no nowhere-zero $3$-flow but $G/e$ has a nowhere-zero $3$-flow for each edge $e\in E(G)$.

The study of vertex coloring  $4$-critical graphs can be traced back to Dirac, Gallai and Ore in 1950s and 1960s (see \cite{KY2014}).  It follows from  Tur\'{a}n's Theorem that 
every $4$-critical graph on $n\ge 5$ vertices has at most $\frac{1}{3}n^2$ edges, since any such  graphs contain no $K_4$ as a subgraph.  In  \cite{Toft1970}, Toft constructed  $4$-critical graphs with more than $\frac{1}{16}n^2$ edges, while the optimal value remains unknown as of today. For the lower bound,   resolving conjectures of Gallai and Ore on the density of $4$-critical graphs, Kostochka and Yancey \cite{KY2014, KY2014ca} proved a tight bound that every $4$-critical graph on $n$ vertices has at least $\frac{5n-2}{3}$ edges. By duality, their theorem shows the following result on $3$-flow-critical planar graphs.

\begin{theorem}\label{planarKY} {\em (Kostochka and Yancey \cite{KY2014, KY2014ca} )}
  For any $3$-flow-critical planar graph $G$ on $n$ vertices, $$|E(G)|\le \frac{5}{2}n-4.$$
  Moreover, the equality holds if and only if $G$ is the dual of a planar $4$-Ore graph.
\end{theorem}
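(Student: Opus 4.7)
The plan is to reduce the claim to the Kostochka-Yancey lower bound on the edge density of $4$-critical graphs via planar duality. I would fix a planar embedding of $G$ and let $G^*$ denote its planar dual, recording that $|E(G)|=|E(G^*)|$, that contraction in $G$ corresponds to deletion in $G^*$, and that bridges and loops swap under duality. Since $G$ is bridgeless, $G^*$ is loopless; and by Tutte's flow-coloring duality, $G$ admits a nowhere-zero $3$-flow if and only if $G^*$ is $3$-colorable, and $G/e$ admits one if and only if $G^*\setminus e^*$ is $3$-colorable. Therefore the $3$-flow-criticality of $G$ translates directly into the statement that $G^*$ is $4$-critical; a short check rules out parallel edges in $G^*$, since any such pair would leave the chromatic number unchanged after deleting one of them and thereby violate edge-criticality. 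Hence $G^*$ is a simple $4$-critical graph.

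Next I would apply the Kostochka-Yancey bound, namely $|E(G^*)| \ge \tfrac{5|V(G^*)|-2}{3}$. Writing $m = |E(G)| = |E(G^*)|$ and $n^* = |V(G^*)|$, Euler's formula $n + n^* - m = 2$ yields $n^* = m - n + 2$, and substitution gives
$$3m \ge 5(m-n+2) - 2 = 5m - 5n + 8,$$
which rearranges to $m \le \tfrac{5n-8}{2}$, exactly the desired bound. For the equality statement, the Kostochka-Yancey characterization of the extremal $4$-critical graphs as the $4$-Ore graphs transports through the duality to the assertion that equality is attained if and only if $G^*$ is $4$-Ore, equivalently, $G$ is the planar dual of a planar $4$-Ore graph.

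The overall argument is essentially a substitution into Euler's formula once the duality correspondence has been properly established. The only subtlety I would verify carefully is the bookkeeping under duality for graphs that may a priori contain parallel edges: specifically, that Tutte's flow-coloring duality applies in the form stated, that the edge correspondence $e \leftrightarrow e^*$ preserves the deletion/contraction operations needed, and that $G^*$ is genuinely simple so that the Kostochka-Yancey theorem can be invoked in its standard form. Beyond these routine checks I do not anticipate a real obstacle; no additional structural analysis of $3$-flow-critical planar graphs is needed, which is exactly the content of the result being obtained \emph{by duality}.
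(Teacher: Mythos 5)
Your proposal is correct and is essentially the paper's own justification: the paper states this theorem as a direct consequence of the Kostochka--Yancey bound (and their characterization of extremal $4$-critical graphs as $4$-Ore graphs) via Tutte's flow--coloring duality, which is exactly the duality-plus-Euler computation you carry out. Your extra checks (simplicity of $G^*$, contraction/deletion correspondence) are the right routine verifications and introduce no gap.
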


A natural question is to ask what is the corresponding density bound for nonplanar graphs. It is easy to see that the upper bound $\frac{5}{2}n-4$  for planar graphs does not hold for general graphs. One may verify that (see Prop \ref{k3c}) the graph $K_{3, n-3}^+$ (where $n\ge 6$) in Figure \ref{fig1} is $3$-flow-critical with $3n-8$ edges, where $K_{3, n-3}^+$ denotes the graph obtained from complete bipartite graph $K_{3,n-3}$ by adding a new edge between two vertices of degree $n-3$.


 In this paper, we provide  linear lower and upper bounds for all $3$-flow-critical graphs.

\begin{theorem}\label{THM: main4n}
  Let $G$ be a $3$-flow-critical graph on $n$ vertices. Then $$\frac{8n-2}{5} \le |E(G)|\le 4n-10,$$
  and each equality holds if and only if $G\cong K_4$. Moreover, we have $\frac{8n+2}{5} \le |E(G)|\le 4n-11$ if $G\not\cong K_4$.
\end{theorem}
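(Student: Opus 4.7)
My plan is to prove the upper and lower bounds as separate arguments, extracting the $K_4$ equality characterization and the strict bounds for $G \not\cong K_4$ in parallel. A common preliminary step establishes that every $3$-flow-critical graph satisfies $\delta(G) \geq 3$: a $2$-vertex $v$ would allow us to contract an incident edge while still admitting no nowhere-zero $3$-flow (since flows are forced through $v$), contradicting criticality, and $\delta \geq 2$ is already ensured by bridgelessness.

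For the upper bound $|E(G)| \leq 4n-10$, I would use the flow-theoretic meaning of criticality at each edge: for every $e = uv$, $G/e$ has a nowhere-zero $3$-flow, which lifts to a $\mathbb{Z}_3$-flow on $G \setminus e$ with equal and opposite nonzero boundary at $u$ and $v$. This rules out various dense configurations; in particular, every nontrivial $\mathbb{Z}_3$-connected subgraph would be contractible without changing criticality, so $G$ cannot contain certain thick neighborhoods. I would then argue that if $G \not\cong K_4$, no vertex reaches degree $n-1$, and more generally pairs of vertices cannot share too many common neighbors. Combining these structural restrictions with $\sum_v \deg(v) = 2|E(G)|$---possibly via induction on $n$ after contracting a carefully chosen subgraph---should yield $|E(G)| \leq 4n-10$, and chasing equality through the chain forces $G \cong K_4$, thereby giving the sharper $4n-11$ for $G \not\cong K_4$.

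For the lower bound $|E(G)| \geq (8n-2)/5$, the target average degree is only $1/5$ above $\delta \geq 3$. I would run a discharging argument with initial charge $\mathrm{ch}(v) = \deg(v) - 16/5$ at each vertex, so that the total initial charge is $2|E(G)| - 16n/5$, which must be shown to exceed $-4/5$ (the exact deficit at $K_4$, where each vertex has charge $-1/5$). A natural rule sends $1/5$ from each vertex of degree at least $4$ to each adjacent $3$-vertex. To make the rule succeed, I need two structural ingredients: first, that clusters of $3$-vertices cannot be too large, because adjacent or closely packed $3$-vertices generate small $\mathbb{Z}_3$-connected subgraphs whose contraction contradicts criticality; second, that a high-degree vertex has enough excess charge to subsidize its $3$-neighbors after discharging. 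Verifying nonnegative final charge everywhere except at the $K_4$ configuration yields $|E(G)| \geq (8n-2)/5$, while noting strictly positive slack at some vertex when $G \not\cong K_4$ promotes the bound to $(8n+2)/5$.

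The main obstacle in both directions is assembling the structural lemmas that drive the arguments: identifying precisely which configurations around $3$-vertices (for the lower bound) and around high-degree vertices (for the upper bound) are forbidden by $3$-flow-criticality. I expect these to follow from the contraction-extension framework for $\mathbb{Z}_3$-connectivity rather than any appeal to Tutte's $3$-flow conjecture itself, paralleling in spirit the vertex-coloring techniques used by Kostochka and Yancey in the dual setting of Theorem~\ref{planarKY}.
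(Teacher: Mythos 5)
Both halves of your plan have genuine gaps. For the upper bound, you never identify a mechanism that converts the local restrictions you list into the linear bound $4n-10$, and two of those restrictions are simply false for $3$-flow-critical graphs: odd wheels are $3$-flow-critical and have a hub of degree $n-1$, and in $K_{3,n-3}^+$ the three vertices of the small side pairwise share $n-3$ common neighbors. The paper's proof of $|E(G)|\le 4n-10$ is not a neighborhood/common-neighbor counting or contraction-induction argument at all: it establishes a potential-function theorem, with $\rho_G({\mathcal X})=\sum_{i}d_G(X_i)-8t+20$ over partitions ${\mathcal X}$, asserting that every connected graph with $\rho(G)\ge 0$ is either $Z_3$-connected or $Z_3$-reduces to one of $K_2,K_3,P_3,K_4$; the proof of that theorem needs the Lov\'asz--Thomassen--Wu--Zhang theorem (every $6$-edge-connected graph is $Z_3$-connected) together with a vertex-splitting lemma, and it is then combined with Nash-Williams' arboricity theorem: since a $3$-flow-critical graph is $Z_3$-reduced, every subgraph $H$ satisfies $|E(H)|\le 4(|V(H)|-1)$, and a $3$-flow-critical graph with at least $4n-10$ edges would have $\rho(G)\ge 0$, which (using $3$-edge-connectivity and essential $4$-edge-connectivity) forces $G\cong K_4$. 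Without some theorem of the type ``dense enough $\Rightarrow$ $Z_3$-connected,'' criticality alone gives no route to the coefficient $4$.

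For the lower bound, your discharging arithmetic is consistent, but the structural ingredient you rely on is false: ``adjacent or closely packed $3$-vertices'' do not generate $Z_3$-connected subgraphs (paths, trees and cycles are never $Z_3$-connected), so criticality does not forbid them; odd wheels have all rim $3$-vertices pairwise adjacent, and in general $G[V_3]$ can be a large tree. The correct statement, which the paper proves by an orientation-switching argument rather than via $Z_3$-connected subgraphs, is only that $G[V_3]$ contains no cycle unless $G$ is an odd wheel. Even granting that, your specific rule fails: a $3$-vertex all of whose neighbors have degree $3$ finishes with charge $-1/5$, and acyclicity of $G[V_3]$ does not bound the number of such vertices by a constant (a tree on $|V_3|$ vertices can have $\Theta(|V_3|)$ internal vertices of degree $3$), so the total charge is not obviously at least $-4/5$. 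You would need either a new lemma controlling those vertices, or a global count like the paper's: $|[V_3,V_3^c]|\ge |V_3|+2$ from acyclicity combined with $|[V_3,V_3^c]|\le 2|E(G)|-3|V_3|$ gives $2|E(G)|\ge 4|V_3|+2$ and hence $|E(G)|\ge (8n+1)/5$; the further improvement to $(8n+2)/5$, and the fact that equality in $\frac{8n-2}{5}$ occurs only for $K_4$, are obtained by a case analysis of modulo $3$-orientations around a degree-$4$ vertex attached to the tree $G[V_3]$, which ``positive slack at some vertex'' does not substitute for.
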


\begin{figure}[ht]
        \centering
        \includegraphics[width=0.5\textwidth]{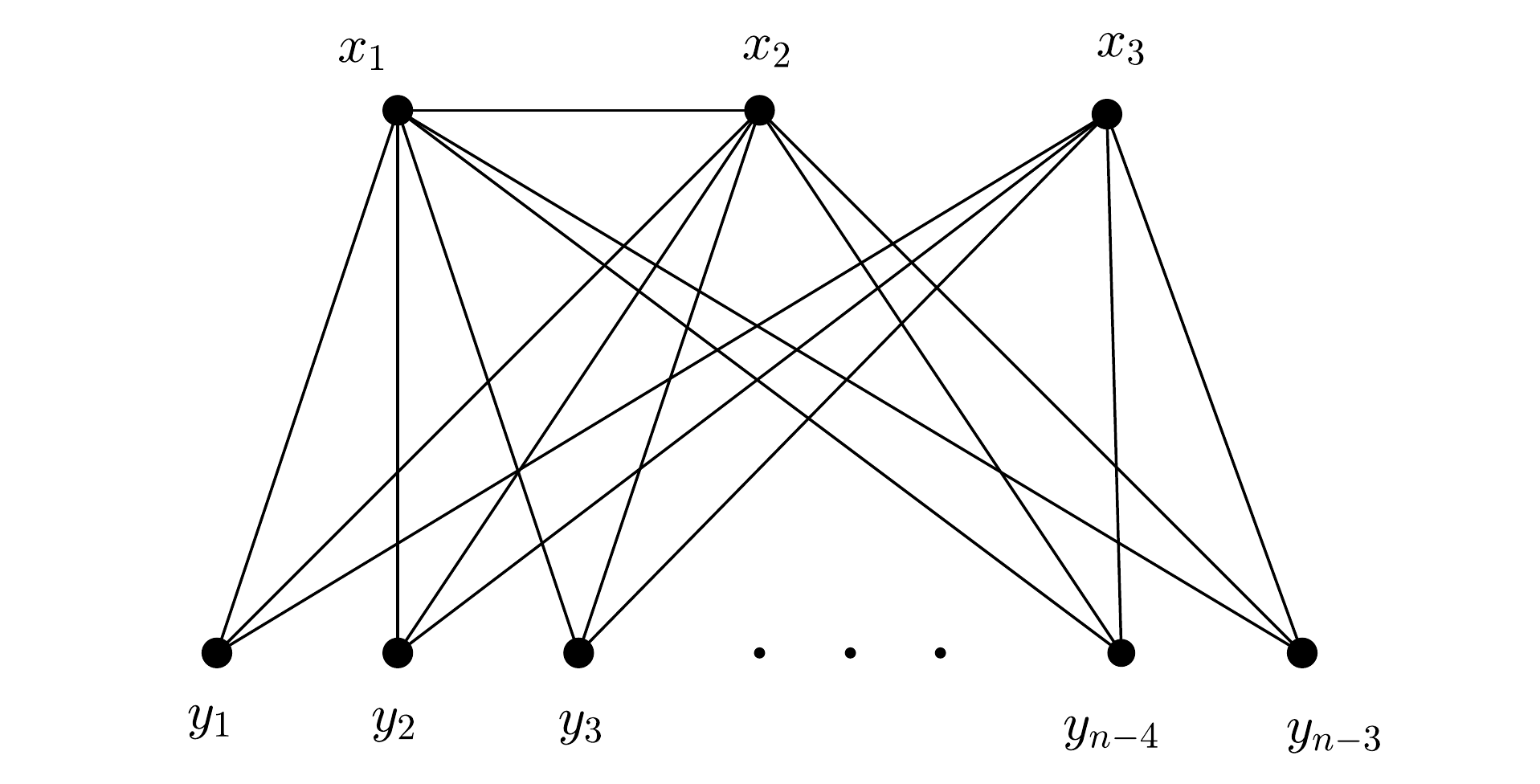}
          \caption{The graph $K_{3, n-3}^+$.}\label{fig1}
        \end{figure}
        
It seems that the bounds in Theorem \ref{THM: main4n} are not optimal in general. A construction of Yao and Zhou \cite{YZ17}  shows that there exists $3$-flow-critical planar graphs on $n$ vertices with $\frac{7n-1}{4} $ edges (see Theorem \ref{YZthm} below). Applying this result, we are able to construct $3$-flow-critical graphs with density from $\frac{7}{4}$ up to $3$ by developing a $2$-sum operation in Section 4. It seems unclear about the exact density lower bound.
But we conjecture that the tight upper bound should be $3n-8$ for $n\geq7$.

\begin{conjecture}\label{CONJ: 3n}
  For any $3$-flow-critical graph $G$ on $n\ge 7$ vertices, $$|E(G)|\le 3n-8.$$
\end{conjecture}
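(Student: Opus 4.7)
The plan is to proceed by induction on $|V(G)|$ using the potential function $\phi(G):=3|V(G)|-|E(G)|$, so that the conjecture reads $\phi(G)\ge 8$ for every $3$-flow-critical graph $G$ on $n\ge 7$ vertices. The extremal graph $K_{3,n-3}^+$ attains $\phi=8$ with a very rigid structure: three pairwise-adjacent near-dominating vertices and $n-3$ degree-$3$ vertices whose neighborhoods lie entirely in that dominating triple. The argument must therefore be tight enough to pin down this structure whenever equality is approached.

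First, I would strengthen the structural toolkit underlying Theorem \ref{THM: main4n}. The key ingredients are: (i) $\delta(G)\ge 3$; (ii) if $H\subsetneq G$ is $Z_3$-connected with $|V(H)|\ge 2$, then $G/H$ is again $3$-flow-critical (a $3$-flow of $G/H$ would lift to one of $G$ via the $Z_3$-connectivity of $H$, and conversely any $3$-flow of $G/e$ for $e\in E(G)\setminus E(H)$ projects); (iii) constraints on small edge cuts, enabling $2$-sum and $3$-sum decompositions into smaller critical pieces; and (iv) a catalogue of forbidden local configurations around $3$-vertices (for instance, adjacent $3$-vertices sharing two common neighbors, or other small substructures ruled out by an explicit $Z_3$-boundary extension). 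The list of configurations required will be considerably longer than for the $4n-10$ bound, and establishing it rigorously is a substantial portion of the work.

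The inductive step then runs as follows. If $G$ contains a $Z_3$-connected subgraph $H$ on $k\ge 2$ vertices with $|E(H)|\le 3k-3$, contracting $H$ gives $\phi(G)-\phi(G/H)=3k-3-|E(H)|\ge 0$, so induction yields $\phi(G)\ge\phi(G/H)\ge 8$. If $G$ admits a small edge cut along which it splits into smaller $3$-flow-critical pieces, the potential is additive up to a bounded error and induction again closes. The base cases on $7\le n\le n_0$ for some small threshold $n_0$ can be handled by direct (possibly computer-assisted) enumeration of $3$-flow-critical graphs.

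The main obstacle is the remaining case: $G$ is highly edge-connected, every $Z_3$-connected subgraph of $G$ (if any) is dense with more than $3k-3$ edges, and the $3$-vertices of $G$ are geometrically spread out. Here the natural fallback is discharging with initial charge $\mu(v):=d(v)-6$, noting that $\sum_v \mu(v)=2|E(G)|-6n=-2\phi(G)$, so the target is $\sum_v\mu(v)\le -16$. The rules would shift negative charge from $3$-vertices to their high-degree neighbors, using the structural lemmas from step one to cap the number of $3$-vertices that any single high-degree vertex can absorb. The hard part will be to produce enough local structure, globally, to drop the bound from $4n-10$ all the way to $3n-8$; the rigidity exhibited by $K_{3,n-3}^+$ strongly suggests that equality forces a near-complete bipartite-plus-edge skeleton, and extracting this rigidity appears to require genuinely new reducibility arguments, which is likely the reason the conjecture remains open.
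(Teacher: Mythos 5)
This statement is Conjecture~\ref{CONJ: 3n}: the paper does not prove it, and neither do you. What you have written is a research program, not a proof, and you say so yourself in the final sentence ("extracting this rigidity appears to require genuinely new reducibility arguments, which is likely the reason the conjecture remains open"). The decisive gap is exactly the case you flag as the "main obstacle": when $G$ has no useful substructure to contract and no small cut to split along, you propose discharging with charge $d(v)-6$ and target $\sum_v(d(v)-6)\le -16$, but you give no discharging rules and no reducible-configuration catalogue, and without them nothing is proved. For comparison, the paper only establishes $|E(G)|\le 4n-10$ in general (via the potential $\rho_G(\mathcal{X})=\sum_i d_G(X_i)-8t+20$ and Theorem~\ref{THM: main}) and proves the $3n-8$ bound solely under the extra hypothesis $|V_3(G)|\ge n-3$ (Proposition~\ref{up3n-8}); so your target genuinely exceeds what is known.

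Two of the specific tools in your plan also need correction. First, the inductive branch "if $G$ contains a $Z_3$-connected subgraph $H$ on $k\ge 2$ vertices, contract it" is vacuous: by Theorem~\ref{THM: prop3f}(iii) a $3$-flow-critical graph is $Z_3$-reduced, so no nontrivial $Z_3$-connected subgraph exists, and this reduction can never fire. Second, the "small edge cut" decomposition is constrained by Theorem~\ref{THM: prop3f}(ii): $G$ is $3$-edge-connected and essentially $4$-edge-connected, so the only candidate splittings are along essential $4$- or $5$-cuts (e.g.\ the $2$-sum structure of Lemma~\ref{2sum}), and you would still have to prove both that such a decomposition yields smaller $3$-flow-critical pieces and that the potential is additive with the right error term — neither is automatic, and the converse direction of Lemma~\ref{2sum} (that every graph with such a cut decomposes into critical pieces) is not established in the paper. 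So the proposal identifies reasonable ingredients (minimum degree $3$, forbidden configurations around $3$-vertices, rigidity of $K_{3,n-3}^+$), but the core of the argument — enough local reducibility to push the density from $4n-10$ down to $3n-8$ — is missing, which is precisely why the statement remains a conjecture.
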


Perhaps  $K_{3, n-3}^+$ is the only extreme graph to attain this bound when $n$ is large. At least, it is true if $|V_3(G)|\geq n-3$, as shown in Prop \ref{up3n-8} in  Section 2.

Tutte's $3$-flow conjecture (see Unsolved Problems \#97 in \cite{BoMu08}) asserts  that every $4$-edge-connected graph admits a nowhere-zero $3$-flow. The density argument, even if Conjecture \ref{CONJ: 3n} was proved, cannot derive the $3$-flow conjecture. We propose a stronger conjecture below, which, if true, implies the $3$-flow conjecture.
\begin{conjecture}\label{CONJ: 2.5n}
  For any $3$-flow-critical graph $G$ on $n$ vertices, $$|E(G)|< \frac{5}{2}n +n_3,$$
  where $n_3=|V_3(G)|$ is the number of $3$-vertices in $G$.
\end{conjecture}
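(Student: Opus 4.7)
My plan is induction on $|V(G)|+|E(G)|$, combining structural reductions at small edge-cuts with a discharging argument on the essentially $4$-edge-connected case. Rewriting, the target inequality is $\sum_{v}(d(v)-5)<2n_{3}$, i.e., the total degree-excess above $5$ (counted with sign) is controlled by the number of $3$-vertices. Equivalently, on the high-degree side, $\sum_{d(v)\ge 6}(d(v)-5)<4n_{3}+n_{4}$, which makes a local charge-transfer scheme natural.

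First I would reduce to essentially $4$-edge-connected graphs. If $G$ contains a non-trivial $2$- or $3$-edge-cut $T$, one splits $G$ along $T$ into two smaller graphs $G_{1},G_{2}$ (contracting one side and possibly attaching a small auxiliary gadget) and argues, via the $\mathbb{Z}_{3}$-connectivity reduction tools from earlier sections of the paper, that each $G_{i}$ can be chosen so as to remain $3$-flow-critical. A careful accounting of how $|V|$, $|E|$, and $n_{3}$ split across the cut should then let the inductive hypothesis on $G_{1}$ and $G_{2}$ recover the inequality for $G$; analogous splits at small vertex-cuts that $3$-flow-critical graphs are known to forbid or restrict handle those cases similarly.

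For the essentially $4$-edge-connected case I would run discharging with initial charge $\mu(v)=d(v)-5-2\cdot\mathbf{1}[d(v)=3]$, whose total equals $2|E(G)|-5n-2n_{3}$, so the conjecture is exactly $\sum_{v}\mu(v)<0$. Under this charge a $3$-vertex starts at $-4$, a $4$-vertex at $-1$, a $5$-vertex at $0$, and a $k$-vertex at $k-5$ for $k\ge 6$. The rules would move the surplus on vertices of degree $\ge 6$ to nearby $3$- and $4$-vertices; the key local lemmas would bound how many low-degree neighbors (or second neighbors) a given high-degree vertex can support, the forbidden local patterns being contractible $\mathbb{Z}_{3}$-connected configurations that contradict the criticality of $G$. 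The strict inequality should follow from showing that at least one vertex always ends with negative charge.

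The main obstacle is the regime where $n_{3}$ is very small. There the inequality approaches $|E(G)|<\tfrac{5}{2}n$, which is essentially as strong as Tutte's $3$-flow conjecture itself---this is precisely why the present conjecture implies the $3$-flow conjecture, and it is why pure local counting cannot close the argument. A genuine proof in that regime would have to encode modulo-$3$ flow information, for example by adapting the mod-$3$ orientation techniques of Lov\'asz--Thomassen--Wu--Zhang to either directly produce a nowhere-zero $3$-flow or else force enough $3$-vertices to power the discharging. Pushing this program all the way down to the $4$-edge-connected threshold is at present completely open, which is why the bound is offered as a conjecture rather than a theorem.
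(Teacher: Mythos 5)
There is a genuine gap here, and it is not a repairable one: the statement you are addressing is Conjecture~\ref{CONJ: 2.5n}, which the paper does not prove and does not claim to prove. The paper's only support for it is the strictly weaker Theorem~\ref{THM: 9n8} (with $9n_8$ in place of $n_3$, proved by a two-line counting argument against the $4n-10$ bound of Theorem~\ref{THM: main4n}) together with the examples $K_{3,n-3}^+$ and $2$-sums of $K_4$'s. Your proposal is a program, not a proof, and you concede the decisive point yourself in the last paragraph: when $n_3$ is small the inequality forces $|E(G)|<\frac{5}{2}n$, i.e.\ a vertex of degree at most $4$ in any $3$-flow-critical graph with no $3$-vertex, which by Theorem~\ref{Kocholthm} (Kochol's equivalences) is exactly Tutte's $3$-flow conjecture. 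No discharging scheme with local rules and local forbidden configurations can close that case, because the best structural input currently available (the graph is $Z_3$-reduced, hence has a vertex of degree at most $5$; $6$-edge-connected graphs are $Z_3$-connected) only yields bounds of the order $|E(G)|\le 4n-11$, far from average degree below $5$. So the ``key local lemmas'' your rules would need simply do not exist yet; writing $\mu(v)=d(v)-5-2\cdot\mathbf{1}[d(v)=3]$ is a correct reformulation of the target, but it carries no new information.

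Two smaller points. Your first reduction step is moot and partly unjustified: by Theorem~\ref{THM: prop3f}(ii) a $3$-flow-critical graph is $3$-edge-connected and essentially $4$-edge-connected, so nontrivial $2$- or $3$-edge-cuts never occur; and in any case the claim that splitting along such a cut yields two smaller graphs that ``remain $3$-flow-critical'' is nowhere established --- the paper's $2$-sum operation (Lemma~\ref{2sum}) composes critical graphs into larger ones and does not provide the reverse decomposition with control of $n$, $|E|$, and $n_3$ that your induction would need. In short, the correct assessment is that this statement is open; your write-up would be acceptable as a discussion of why it is hard, but not as a proof.
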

Note that $K_{3, n-3}^+$ satisfies Conjecture \ref{CONJ: 2.5n} since it has many $3$-vertices. There is another family of $3$-flow-critical graphs on $n$ vertices constructed from $2$-sum of $K_4$'s, which contains four $3$-vertices and $n-4$ $5$-vertices, approaching the bound in Conjecture \ref{CONJ: 2.5n}. To support Conjecture \ref{CONJ: 2.5n}, we provide the following result.
\begin{theorem}\label{THM: 9n8}
  For any $3$-flow-critical graph $G$ on $n$ vertices, $$|E(G)|< \frac{5}{2}n +9n_8,$$
  where $n_8=|V_{\le 8}(G)|$ is the number of  vertices of degree at most $8$ in $G$.
\end{theorem}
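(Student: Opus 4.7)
The plan is to derive Theorem \ref{THM: 9n8} as a short corollary of Theorem \ref{THM: main4n} together with the minimum-degree fact that $\delta(G) \ge 3$ for every $3$-flow-critical graph $G$. The stated inequality $|E(G)| < \frac{5}{2}n + 9n_8$ is equivalent to $2|E(G)| < 5n + 18 n_8$, and I would work with this degree-sum form throughout.

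First I would verify $\delta(G) \ge 3$. Since $G$ is bridgeless, $\delta(G) \ge 2$; and if $G$ contained a $2$-vertex $v$ with incident edges $e_1, e_2$, then criticality would yield a nowhere-zero $3$-flow on $G/e_1$, which extends to a nowhere-zero $3$-flow of $G$ by assigning $e_1$ the same flow value as $e_2$, with orientation chosen so that $v$ is balanced. This contradicts the assumption that $G$ admits no nowhere-zero $3$-flow; the case in which $e_1$ and $e_2$ are parallel is handled the same way, since the loop created by contraction can be assigned any nonzero flow without disturbing balance.

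Next I would combine the resulting degree-sum lower bound
\[
2|E(G)| \;=\; \sum_{v \in V(G)} d(v) \;\ge\; 3 n_8 + 9(n - n_8) \;=\; 9n - 6n_8
\]
with the upper bound $2|E(G)| \le 8n - 20$ from Theorem \ref{THM: main4n} to deduce
\[
n_8 \;\ge\; \frac{n+20}{6}.
\]
Substituting this into the target quantity gives
\[
\frac{5n}{2} + 9n_8 \;\ge\; \frac{5n}{2} + \frac{3(n+20)}{2} \;=\; 4n + 30 \;>\; 4n - 10 \;\ge\; |E(G)|,
\]
which is the desired strict inequality. The main obstacle is really just the verification of $\delta(G) \ge 3$; everything else is a mechanical substitution that hinges on the observation that the $4n-10$ bound, combined with the minimum-degree condition, forces $n_8$ to grow linearly in $n$ with enough margin to absorb the gap between $|E(G)|$ and $\frac{5n}{2}$.
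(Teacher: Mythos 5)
Your proposal is correct and uses essentially the same ingredients as the paper's proof: the upper bound $|E(G)|\le 4n-10$ from Theorem \ref{THM: main4n} combined with the degree-sum estimate $2|E(G)|\ge 3n_8+9(n-n_8)$ coming from $\delta(G)\ge 3$ (which the paper gets from Theorem \ref{THM: prop3f}(ii), while you reprove it directly). The only difference is bookkeeping: the paper argues by contradiction with a case split on whether $n_8\ge n/6$, whereas you solve the same two inequalities directly for $n_8\ge\frac{n+20}{6}$ and substitute, which is fine.
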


~

The rest of the paper is organized as follows. In section 2, we introduce a few basic notation and terminology, and then investigate structures of $3$-flow-critical graphs to prove the lower bound in Theorem \ref{THM: main4n}. In section 3, we complete the proof of the upper bound in Theorem \ref{THM: main4n} as well as the proof of Theorem \ref{THM: 9n8}. Finally, we  develop some operations to construct $3$-flow-critical graphs with density between $\frac{7}{4}$ and $3$ in section 4.

\section{Properties of $3$-flow-critical graphs}

For vertex subsets $U,  W\subseteq V(G)$, let
$[U, W]_G = \{uw \in E(G)|u \in U, w \in W\}$. When $U =  \{u\}$ or
$W =\{w\}$, we use $[u,W]_G$ or $[U,w]_G$ for $[U,W]_G$, respectively. The subgraph of $G$ induced by $U$ is denoted by $G[U]$. For any subset $S\subseteq V(G)$, we denote $S^c=V(G)-S$.  An  edge cut $[S,S^c]_G$ is called {\em essential} if there are at least two nontrivial components in $G-[S,S^c]_G$. A graph is called essentially $k$-edge-connected if it contains no essential edge cut with less than $k$ edges. When there is no scope for ambiguity, the subscript $G$ may be omitted. 
Contracting an edge of a graph means to identify  its two endpoints and then delete the resulting loops.
For an edge $e\in E(G)$ and a subgraph $H$ of $G$, we write $G/e$ to denote the graph obtained from $G$ by contracting $e$, 
and denote by $G/H$ as a graph obtained from $G$ by successively contracting the edges of $E(H)$.

Let $d^+_D(v)=|E^+_D(v)|$ and $d^-_D(v)=|E^-_D(v)|$ denote
the out-degree and the in-degree of $v$ under the orientation $D$,
respectively. Let $Z_n$ be the set of integers modulo $n$. A function $\beta$: $V(G) \rightarrow Z_3$ is a $Z_3$-boundary if $\sum_{v\in V(G)}\beta (v)\equiv 0\pmod3$. For a given $Z_3$-boundary $\beta$, a {\em $\beta$-orientation} is an orientation $D$ of $G$ such that $d^+_D(v) - d^-_D(v)\equiv \beta (v)\pmod3$ for each $v\in V(G)$. Especially, a  {\em modulo $3$-orientation} of $G$ is a $\beta$-orientation with $\beta (v) \equiv0\pmod3$ for each $v\in V(G)$. We call a graph $G$ {\em $Z_3$-connected}   if for any $Z_3$-boundary $\beta$ of  $G$, there exists a $\beta$-orientation of $G$.
A graph is called {\em $Z_3$-reduced} if it does not have any nontrivial $Z_3$-connected subgraphs.
It is well-known that a graph admits a nowhere-zero $3$-flow if and only if it admits a modulo $3$-orientation (see \cite{Younger1983, Zhang1997}). Therefore, in the rest of this paper we will study nowhere-zero $3$-flows in terms of modulo $3$-orientations.

A useful method to prove $Z_3$-connectedness  is the following lemma.
\begin{lemma}\label{lem1}{\em (Lai \cite{LaiH00})}
Let G be a graph, and let $H\subseteq G$ be a subgraph of $G$.

 (i) If $H$ is $Z_3$-connected  and $G/H$ has a modulo $3$-orientation, then $G$ has a modulo $3$-orientation.

 (ii) If both $H$ and $G/H$ are $Z_3$-connected, then $G$ is also $Z_3$-connected.

 (iii) The graph $2K_2$ is $Z_3$-connected, where $2K_2$ consists of two vertices and two parallel edges.
\end{lemma}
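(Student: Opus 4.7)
The plan is to handle the three parts in the order (iii), (i), (ii), since (iii) is a direct check, (i) is morally a special case of (ii) but is a useful warm-up, and (ii) contains the main bookkeeping.

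For (iii), I would simply enumerate the three (unoriented) orientations of $2K_2$ up to swapping the two parallel edges. With vertices $u,v$ and both edges directed toward $v$, one each way, or both directed toward $u$, the quantity $d^+_D(u)-d^-_D(u)$ takes values $2,\ 0,\ -2\equiv 1 \pmod 3$ respectively, hitting every residue class. Any $Z_3$-boundary $\beta$ on $\{u,v\}$ is determined by $\beta(u)$ (since $\beta(u)+\beta(v)\equiv 0\pmod 3$), so each such $\beta$ is realized and $2K_2$ is $Z_3$-connected.

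For (i), I start from a modulo $3$-orientation $D_0$ of $G/H$. It already assigns directions to every edge of $E(G)\setminus E(H)$. For each $v\in V(H)$, define the residual demand $\alpha(v)\equiv d^-_{D_0}(v)-d^+_{D_0}(v)\pmod 3$, where the degrees are computed only along edges of $E(G)\setminus E(H)$ incident to $v$. The crucial step is the verification $\sum_{v\in V(H)}\alpha(v)\equiv 0 \pmod 3$: the edges counted on the left are exactly the edges incident with the contracted vertex $v_H$ in $G/H$, so this sum equals the imbalance of $v_H$ under $D_0$, which is $0$ by hypothesis. Hence $\alpha$ is a legitimate $Z_3$-boundary on $H$, and $Z_3$-connectedness of $H$ yields an orientation of $E(H)$ realizing $\alpha$. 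Gluing it to $D_0$ produces an orientation of $G$ in which every vertex (including the vertices of $V(H)$) is balanced modulo $3$.

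For (ii), given an arbitrary $Z_3$-boundary $\beta\colon V(G)\to Z_3$, I push it to $G/H$ by setting $\beta'(x)=\beta(x)$ for $x\notin V(H)$ and $\beta'(v_H)=\sum_{v\in V(H)}\beta(v)\pmod 3$. Since $\sum_{x\in V(G/H)}\beta'(x)=\sum_{x\in V(G)}\beta(x)\equiv 0\pmod 3$, this is a valid $Z_3$-boundary of $G/H$, and $Z_3$-connectedness of $G/H$ provides a $\beta'$-orientation $D'$. Restricting $D'$ to $E(G)\setminus E(H)$ leaves each $v\in V(H)$ with an uncovered demand $\beta''(v)\equiv \beta(v)-\bigl(d^+_{D'}(v)-d^-_{D'}(v)\bigr)\pmod 3$, and the same $v_H$-balance argument as in (i) shows that $\sum_{v\in V(H)}\beta''(v)\equiv 0\pmod 3$. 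Applying $Z_3$-connectedness of $H$ to $\beta''$ finishes a $\beta$-orientation of $G$. The only genuine obstacle in the whole proof is this bookkeeping identity at $v_H$; everything else is definition chasing.
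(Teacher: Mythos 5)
The paper does not prove this lemma at all -- it is quoted as a known result of Lai \cite{LaiH00} -- so there is no in-paper argument to compare against. Your proof is the standard one and is essentially correct: part (iii) by exhausting the three orientations of $2K_2$, and parts (i)--(ii) by orienting $E(G)\setminus E(H)$ via the (modulo $3$-, resp.\ $\beta'$-) orientation of $G/H$, checking that the residual demand on $V(H)$ sums to $0 \pmod 3$ because it equals the imbalance (resp.\ boundary value) at the contracted vertex $v_H$, and then invoking $Z_3$-connectedness of $H$. One small point to tighten: if $H$ is not an induced subgraph, edges of $G$ with both ends in $V(H)$ but not in $E(H)$ become loops under contraction and are deleted, so they are \emph{not} oriented by $D_0$ (or $D'$), contrary to your claim that every edge of $E(G)\setminus E(H)$ is already directed. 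The fix is immediate -- orient such edges arbitrarily; each contributes cancelling $\pm 1$ to two vertices of $V(H)$, so the boundary-sum identity at $v_H$ is unaffected, and $Z_3$-connectedness of $H$ absorbs the adjusted demand -- but it should be said, since the lemma as stated allows arbitrary subgraphs $H$.
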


 A wheel graph  $W_k$ is  constructed by adding a new center vertex  connecting  to each vertex of a $k$-cycle, where $k\ge 3$. A wheel $W_k$ is odd if $k$ is  odd, and even otherwise.
 \begin{lemma}\label{lem-wheel} {\em(Fan, Lai, Xu, Zhang, Zhou \cite{FanZ_3})}
   A wheel $W_k$ is   $Z_3$-connected if and only if $k$ is even. Furthermore, each odd wheel does not admit a nowhere-zero $3$-flow.
 \end{lemma}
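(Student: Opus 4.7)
The plan is to exploit the fact that every rim vertex of $W_k$ has degree exactly $3$, which severely constrains any modulo $3$-orientation.

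For the ``furthermore'' statement and the ``only if'' direction, observe that in any modulo $3$-orientation $D$ of $W_k$, each rim vertex $v_i$ satisfies $d^+_D(v_i)-d^-_D(v_i)\in\{-3,-1,1,3\}$ and must be $\equiv 0\pmod 3$, forcing it to be $\pm 3$; equivalently, $v_i$ is a local source or local sink. Consistency on each rim edge $v_iv_{i+1}$ then forces its two endpoints to have opposite types, yielding a proper $2$-coloring of the rim cycle $C_k$. Such a coloring exists iff $k$ is even, so odd wheels admit no modulo $3$-orientation, hence no nowhere-zero $3$-flow, and in particular cannot be $Z_3$-connected.

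For the ``if'' direction, fix an even wheel $W_{2k}$ (with $k\ge 2$) and a $Z_3$-boundary $\beta$ with $\sum_v\beta(v)\equiv 0\pmod 3$. I parameterize a prospective orientation by $a_i\in\{\pm 1\}$ on the spokes ($a_i=+1$ meaning $v_i\to c$) and $b_i\in\{\pm 1\}$ on the rim edges ($b_i=+1$ meaning $v_i\to v_{i+1}$). A direct computation shows the rim-vertex equations become $b_i-b_{i-1}\equiv c_i\pmod 3$ where $c_i:=\beta(v_i)-a_i$, and the center equation becomes $\sum_i a_i\equiv -\beta(c)\pmod 3$. Since each difference $b_i-b_{i-1}$ lies in $\{-2,0,2\}\equiv\{1,0,2\}\pmod 3$, a compatible cyclic $\pm 1$-sequence $(b_i)$ exists iff the nonzero entries of $(c_1,\ldots,c_{2k})$ cyclically alternate between $1$ and $2$. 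Flipping $a_i$ toggles $c_i$ between two values differing by $-1\pmod 3$ (one of which is $0$ precisely when $\beta(v_i)\neq 0$), providing the freedom needed to arrange both the alternation condition and the sum condition.

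The main obstacle is verifying that this adjustment can always be carried out. I would start from the baseline $\beta\equiv 0$ on the rim: every $c_i$ is then forced nonzero, and the strict alternation $1,2,1,2,\ldots$ around $C_{2k}$ is realizable precisely because $2k$ is even (the parity obstruction from the ``only if'' direction now works in our favor). The explicit choice $a_i=(-1)^{i+1}$ yields $\sum a_i=0\equiv -\beta(c)$. For general $\beta$, I would process the rim vertices with $\beta(v_i)\neq 0$ one at a time, each time choosing $a_i$ so as to insert a $0$ into the $c$-sequence at position $i$, which relaxes the alternation requirement locally; a final flip of some $a_j$ with $\beta(v_j)=0$ then corrects $\sum a_i\pmod 3$ if necessary without disturbing the alternation of the nonzero entries. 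The evenness of $2k$ is what guarantees that such a corrective flip is always available, and bookkeeping these flips is the technical heart of the argument.
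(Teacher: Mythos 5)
The paper does not prove this lemma at all (it is quoted from Fan--Lai--Xu--Zhang--Zhou \cite{FanZ_3}), so the only question is whether your argument stands on its own. Your treatment of the odd case and of the ``only if'' direction is correct and complete: a degree-$3$ vertex in a modulo $3$-orientation must be a source or a sink, adjacent rim vertices must then be of opposite types, and an odd rim cycle admits no such $2$-coloring; since the zero boundary is a $Z_3$-boundary, this also rules out $Z_3$-connectivity. Your setup for the even case is also sound: with $c_i=\beta(v_i)-a_i$, a rim sequence $(b_i)$ exists iff the nonzero $c_i$ cyclically alternate between $1$ and $2$, and the center imposes $\sum_i a_i\equiv-\beta(c)\pmod 3$.

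The gap is in the step you yourself flag as ``the technical heart'': you never verify that the alternation condition and the center condition can be met simultaneously, and the correction mechanism you propose does not work. Flipping $a_j$ at a vertex with $\beta(v_j)=0$ changes $c_j$ from $1$ to $2$ or vice versa (it can never make $c_j=0$), so it \emph{does} disturb the alternation of the nonzero entries whenever any other nonzero entry exists; as described, the ``final corrective flip'' destroys the rim solvability you just arranged. What actually closes the argument is that no correction is needed: let $Z=\{i:\beta(v_i)=0\}$ and $N=\{i:\beta(v_i)\neq 0\}$, and choose $a_i$ to make $c_i=0$ for every $i\in N$ (if $|Z|$ is even) or for all but one $j\in N$ (if $|Z|$ is odd, which forces $N\neq\emptyset$ since $|Z|+|N|=2k$); the remaining nonzero entries, which are free at $Z$ and forced only at the single $j$, can be cyclically alternated because their number is even. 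A short computation using $n_1-n_2\equiv -\beta(c)\pmod 3$ (which follows from $\sum_v\beta(v)\equiv 0$, where $n_t=|\{i:\beta(v_i)=t\}|$) shows that in either case $\sum_i a_i\equiv -\beta(c)$ holds automatically. This bookkeeping is exactly the content of the even-wheel statement, and without it your proof of the ``if'' direction is incomplete.
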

As an example, it is an easy exercise to verify that each odd wheel is  $3$-flow-critical  by Lemmas \ref{lem1} and \ref{lem-wheel}.

Our first result of this section is  the following fundamental structure properties of $3$-flow-critical graphs.
\begin{theorem}\label{THM: prop3f}
  Let $G$ be a $3$-flow-critical graph. Then each of the following holds.

  (i) For any $e\in E(G)$, $G-e$ admits a nowhere-zero $3$-flow.

  (ii) $G$ is $3$-edge-connected and essentially $4$-edge-connected.

  (iii) $G$ is $Z_3$-reduced.

  (iv) $G[V_3]$ contains no cycle, unless $G$ is an odd wheel.
\end{theorem}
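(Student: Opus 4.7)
The plan is to establish the four statements in order, with each part using its predecessors.

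For part (i), I would start from a modulo $3$-orientation of $G/e$ (guaranteed by $3$-flow-criticality) and lift it to an orientation of $G-e$. The only possibly unbalanced vertices in $G-e$ are the endpoints $u,v$ of $e$, whose imbalances $d^+-d^-$ sum to $0\pmod 3$. If these imbalances were both nonzero (necessarily $\pm 1$ and $\mp 1$), reinserting $e$ in the orientation that cancels them would give a modulo $3$-orientation of $G$, contradicting criticality; hence both imbalances are $0$ and $G-e$ itself carries a nowhere-zero $3$-flow. Part (ii) is then short: a $2$-edge cut $\{e_1,e_2\}$ would make $e_2$ a bridge of the flow-admitting $G-e_1$, which is impossible; and for an essential $3$-edge cut $[S,S^c]=\{e_1,e_2,e_3\}$ with both sides nontrivial, iterated contraction applied to criticality yields modulo $3$-orientations on each side in which the degree-$3$ contracted vertex forces the three cut edges to point in a common direction, and after reversing one orientation globally if necessary the two half-orientations paste together into a modulo $3$-orientation of $G$, a contradiction. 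For part (iii), if a nontrivial $Z_3$-connected subgraph $H\subseteq G$ existed, picking $e\in E(H)$ would give a $3$-flow on $G/e$; contracting the remaining edges of $H$ preserves this property, so $G/H$ admits a modulo $3$-orientation, and Lemma \ref{lem1}(i) lifts it to $G$, contradicting that $G$ has no nowhere-zero $3$-flow.

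Part (iv) is the substantive step. The key observation is that in any modulo $3$-orientation, a vertex of degree $3$ must have all three incident edges pointing in a common direction; I label this direction by $\epsilon_i\in\{+,-\}$ at each $3$-vertex $v_i$. Along any cycle $C=v_1v_2\cdots v_kv_1$ of $G[V_3]$ with external neighbors $u_i$, the labels $\epsilon_i$ must alternate between consecutive $v_i$'s. To rule out even $k$, I would apply (i) to $G-v_1u_1$: in the resulting $3$-flow, $v_1$ is a $2$-vertex, so its two cycle-edges point oppositely at $v_1$, yielding the cross constraint $\epsilon_k=-\epsilon_2$; combined with $\epsilon_k=(-1)^{k-2}\epsilon_2$ from the $k-2$ alternations along $v_2,\ldots,v_k$, this forces $(-1)^{k-2}=-1$, so $k$ is odd. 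For odd $k$, if all $u_i$ coincide at a single vertex $u$, then $C$ together with the $k$ spokes forms an odd wheel $W_k\subseteq G$; if $G\ne W_k$, then the $3$-regularity of the $v_i$ confines all their incidences to $W_k$ and by (ii) some extra edge $uw$ of $G$ must be incident to $u$, but $G/uw$ still contains the odd cycle of $3$-vertices and therefore admits no modulo $3$-orientation, contradicting criticality of $G/uw$. Hence $G=W_k$, the exception allowed by the statement.

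If the $u_i$'s are not all equal, I would first note that any edge of $G$ outside $E(C)\cup\{v_iu_i\}$ would, upon contraction, again preserve the odd cycle of $3$-vertices and contradict criticality, so $E(G)=E(C)\cup\{v_iu_i\}$; the remaining configurations are then eliminated by a case analysis combining the parity constraints with the $3$-edge-connectivity and essential $4$-edge-connectivity from (ii) and the $Z_3$-reducedness from (iii). The main obstacle of the proof is exactly this last step in (iv): showing that an odd cycle of $3$-vertices whose spokes scatter among multiple external vertices cannot coexist with the strong connectivity and $Z_3$-reducedness forced by $3$-flow-criticality, so that the odd wheel is the only exception.
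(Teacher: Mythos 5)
Your handling of (i)--(iii) is correct and essentially the paper's argument, with two harmless variants: for $2$-edge-cuts you note that the second cut edge becomes a bridge of the flow-admitting graph $G-e_1$ (shorter than the paper's balancing argument), and in (iii) and in the reduction step of (iv) you contract the remaining edges rather than delete-and-restrict, which is fine since contraction preserves the existence of a modulo $3$-orientation. In (iv), the even-cycle case, the reduction to $E(G)=E(C)\cup\{v_iu_i:1\le i\le t\}$, and the subcase where all $u_i$ coincide are also sound. The genuine gap is exactly the case you yourself label ``the main obstacle'': $t$ odd, $E(G)=E(C)\cup\{v_iu_i\}$, and the $u_i$ not all equal. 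There you offer only ``a case analysis combining the parity constraints with (ii) and (iii)'' without giving it, and it is doubtful these tools suffice: (ii) and (iii) are necessary conditions shared by many non-critical graphs of precisely this shape. For instance, take a $9$-cycle of cubic vertices together with three pairwise nonadjacent external vertices, each joined to every third cycle vertex; this cubic graph is $3$-edge-connected, has no essential $3$-edge-cut, and has girth $5$, so it contains none of the familiar small $Z_3$-connected subgraphs ($2K_2$, $K_4$, even wheels) and there is no evident way to exclude it by connectivity and $Z_3$-reducedness alone. Eliminating such configurations requires invoking criticality again, not just its structural corollaries.

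The missing step is supplied in the paper as follows, and your proof needs it (or an equivalent). Choose $j$ with $u_j\neq u_{j+1}$ and let $D_j$, $D_{j+1}$ be modulo $3$-orientations of $G-e_j$ and $G-e_{j+1}$ from (i). Since every vertex of $C$ other than the single vertex of degree two is a $3$-vertex, each orientation is rigid: the all-in/all-out pattern propagates around $C$ and, because every edge of $G$ is incident with $C$, determines all edge directions up to global reversal. Normalizing (say $v_{j-1}\to v_j$ in $D_j$, and reversing $D_{j+1}$ if necessary), the two orientations agree on every edge except $e_j$, $e_{j+1}$ and $v_jv_{j+1}$, and one reads off $u_{j+1}\to v_{j+1}$ in $D_j$ and $u_j\to v_j$ in $D_{j+1}$. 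Since $u_j\neq u_{j+1}$, none of $e_{j+1}$, $v_jv_{j+1}$ is incident with $u_j$, so $d^+_{D_{j+1}}(u_j)=d^+_{D_j}(u_j)+1$ while $d^-_{D_{j+1}}(u_j)=d^-_{D_j}(u_j)$; hence $u_j$ cannot be balanced modulo $3$ in both orientations, the desired contradiction. Until this comparison of two orientations (or a genuine substitute) is written out, part (iv) is not proved.
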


\begin{proof}
(i)  For any $e=uv\in E(G)$, let $D$ be a modulo $3$-orientation of $G/e$.  Let $D^*$ be the restriction of $D$ on $G-e$. By  arbitrarily orienting each edge in $E(G-e) \setminus E(G/e)$ (if any), we obtain an orientation $D'$ of $G-e$.
If $D'$ is not a modulo $3$-orientation of $G-e$, then  either $d_{D'}^+(u)-d_{D'}^-(u)\equiv d_{D'}^-(v)-d_{D'}^+(v)\equiv 1 \pmod3$ or $d_{D'}^+(u)-d_{D'}^-(u)\equiv d_{D'}^-(v)-d_{D'}^+(v)\equiv -1 \pmod3$. So $D'$ can be extended to a modulo $3$-orientation of $G$ by letting $v \rightarrow u$ or $u \rightarrow v$, a contradiction. Hence, $G-e$ has a modulo $3$-orientation, and so it admits a nowhere-zero $3$-flow.


(ii) Let  $[S,S^c]_G$ be an  edge cut of $G$. 
Suppose $|[S,S^c]_G|=2$. Denote $[S,S^c]_G=\{u_1v_1,u_2v_2\}$ and $\{u_1,u_2\}\subseteq S$. By definition, $G/u_1v_1$ has a modulo $3$-orientation, and, WLOG, assume $u_2\rightarrow v_2$. Transfer this orientation to $G$ and let $v_1\rightarrow u_1$ to obtain an orientation $D$ of $G$. We have that every vertex, except perhaps $u_1$ and $v_1$, is balanced in $D$.
Then we know that every vertex except perhaps $v_1$ is balanced under the restriction of $D$ on $G/G[S]$, and so is $v_1$ by looking the sum of difference between outdegree and indegree. Thus, the orientation $D$ is  balanced at $v_1$, and thus at $u_1$. This completes a modulo $3$-orientation of $G$, a contradiction.

%
 Now suppose $|[S,S^c]_G|=3$, $|E(G[S])|\geq1$ and  $|E(G[S^c])|\geq1$. Assume $e_1\in E(G[S])$ and $e_2\in E(G[S^c])$. By definition, $G/e_1$ admits a modulo $3$-orientation $D'$ such that the three edges in $[S,S^c]_G$ are all leaving or entering $S^c$. Then the restriction $D_1$ of $D'$ on $G/G[S]$ is a modulo $3$-orientation. By symmetry, $G/G[S^c]$ has a modulo $3$-orientation $D_2$. Then either $D_1$ and $D_2$ agree along $[S,S^c]_G$ directly, or they agree after reversing all edge directions in $D_2$. Thus, their union provides a modulo $3$-orientation of $G$, a contradiction. Hence $G$ is $3$-edge-connected and essentially $4$-edge-connected.

 (iii) Suppose that $H$ is a nontrivial $Z_3$-connected subgraph of $G$. Let $u_1v_1\in E(H)$. By (i), $G-u_1v_1$ admits a modulo $3$-orientation $D_1$. Thus the restriction  $D'$ of $D_1$ on $G/H$ is also a modulo $3$-orientation. By Lemma \ref{lem1}, $G$ has a modulo $3$-orientation, a contradiction. So $G$ is $Z_3$-reduced.


 (iv) Suppose, by contradiction, that $G$ is not an odd wheel and  $G[V_3]$ contains a cycle. Assume $C=v_1v_2\ldots v_tv_1$ is the minimal cycle
in $G[V_3]$. Note that $C$ is an induced subgraph of $G$.  Let $u_i$ be the neighbor of $v_i$ which is not on $C$ and let $e_i=u_iv_i$. 

First, suppose $t$ is even. By (i), $G-e_1$ admits a modulo $3$-orientation $D'$. It implies that $d^+_{D'}(v_i)=3$ or $d^-_{D'}(v_i)=3$  for each $i\in\{2,3,\ldots,t\}$. Since $t$ is even,  we have  $d^+_{D'}(v_2)=d^+_{D'}(v_t)=3$ or $d^-_{D'}(v_2)=d^-_{D'}(v_t)=3$, which implies  that $d^-_{D'}(v_1)=2$ or $d^+_{D'}(v_1)=2$. So $v_1$ is not balanced in $D'$. This leads to a contradiction.

Next, suppose $t$ is odd. If there exists an edge $e$ that is not incident to any vertex on $C$, then by (i), $G-e$ admits a modulo $3$-orientation $D'$. It implies that $d^+_{D'}(v_i)=3$ or $d^-_{D'}(v_i)=3$  for each $i\in\{1,2,\ldots,t\}$.  Since $t$ is odd,  we have either  $d^+_{D'}(v_2)=d^-_{D'}(v_t)=3$ or $d^-_{D'}(v_2)=d^+_{D'}(v_t)=3$, which implies that $v_1$ is not balanced in $D'$, a contradiction.
Hence we suppose $E(G)=E(C)\cup\{e_1, e_2, \ldots, e_t\}$. 
Since $G$ is not an odd wheel, there exists an index $j\in\{1,2,\ldots,t\}$ such that $u_j\neq u_{j+1}$.
By (i), $G-e_j$ admits a modulo $3$-orientation $D_j$ and $G-e_{j+1}$ admits a modulo $3$-orientation $D_{j+1}$, respectively. Without loss of generality, assume $v_{j-1}\rightarrow v_j$  in $D_j$. Then we have $v_j\rightarrow v_{j+1}$ and $u_{j+1}\rightarrow v_{j+1}$ in $D_j$.  Similarly, WLOG, assume $v_{j+2}\rightarrow v_{j+1}$ in $D_{j+1}$. Then we get $v_{j+1}\rightarrow v_{j}$ and $u_{j}\rightarrow v_{j}$ in $D_{j+1}$.   Besides, the direction of $e$ in $D_{j+1}$ is the same as that in $D_{j}$ for each $e\in E(G) \setminus \{e_j,e_{j+1},v_jv_{j+1}\}$. Thus we have $d^+_{D_{j}}(u_j)=d^+_{D_{j+1}}(u_j)-1$ and $d^-_{D_{j}}(u_j)=d^-_{D_{j+1}}(u_j)$, which implies that 
$u_j$ is not  balanced in $D_{j+1}$ since it is balanced in $D_{j}$,  a contradiction again.
\end{proof}

Kochol \cite{Kochol01,Kochol02} obtained two equivalent statements of Tutte's $3$-flow conjecture as follows: (i) every $5$-edge-connected graph admits a nowhere-zero $3$-flow,  (ii) every bridgeless graph with at most three edge cuts of size three admits a nowhere-zero $3$-flow.  By Theorem \ref{THM: prop3f}, the results of Kochol \cite{Kochol01,Kochol02} can be restates as certain properties of $3$-flow-critical graphs.
\begin{theorem}\label{Kocholthm} {\em (Kochol \cite{Kochol01,Kochol02})}
  Tutte's $3$-flow conjecture is equivalent to each of the following statements.

  (a) Every $3$-flow-critical graph contains a vertex of degree $3$.

  (b) Every $3$-flow-critical graph contains a vertex of degree at most $4$.

  (c) $|V_3(G)|\ge 4$ for every $3$-flow-critical graph $G$.
\end{theorem}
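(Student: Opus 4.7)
The plan is to establish the cycle of implications (3FC) $\Rightarrow$ (c) $\Rightarrow$ (a) $\Rightarrow$ (b) $\Rightarrow$ (3FC), where (3FC) denotes Tutte's $3$-flow conjecture. The two middle implications are immediate from the definitions (every vertex of degree $3$ has degree at most $4$, and the existence of $\ge 4$ such vertices trivially implies the existence of at least one), so the real work lies at the endpoints of the cycle, each of which invokes a different one of the two Kochol reformulations already quoted immediately before the statement.

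For (3FC) $\Rightarrow$ (c), I would take the contrapositive of Kochol's reformulation~(ii): any bridgeless graph admitting no nowhere-zero $3$-flow must possess at least four edge cuts of size three. Apply this to a $3$-flow-critical graph $G$. By Theorem~\ref{THM: prop3f}(ii), $G$ is $3$-edge-connected and essentially $4$-edge-connected, so every $3$-edge-cut in $G$ is non-essential; since removing three edges from a $3$-edge-connected graph leaves exactly two components, the non-essential condition forces one side to consist of a single vertex, necessarily of degree~$3$. Observing that any $3$-flow-critical graph has $|V(G)| \ge 3$ (the only $2$-vertex candidate, a triple bond, admits a nowhere-zero $3$-flow by Lemma~\ref{lem1}), the at-least-four $3$-edge-cuts correspond to at least four distinct $3$-vertices, yielding $|V_3(G)| \ge 4$.

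For (b) $\Rightarrow$ (3FC), I would use Kochol's reformulation~(i) and argue by contradiction: suppose some $5$-edge-connected graph $H$ admits no nowhere-zero $3$-flow. Starting from $H$, repeatedly contract an edge whose contraction preserves the property of having no nowhere-zero $3$-flow, until no such edge remains; let $H'$ be the terminal graph. Then $H'$ is bridgeless (edge contraction preserves bridgelessness) and, by construction, $3$-flow-critical. By (b), some $v \in V(H')$ satisfies $d_{H'}(v) \le 4$. Letting $S \subseteq V(H)$ denote the set of vertices of $H$ that were identified to form $v$, the edges of $H'$ incident to $v$ are precisely the edges of $[S, S^c]_H$; since $H'$ has at least one edge and hence at least two vertices, $S$ is a proper nonempty subset of $V(H)$, so $|[S, S^c]_H| = d_{H'}(v) \le 4$ contradicts the $5$-edge-connectedness of $H$.

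The only real obstacle is the mild bookkeeping in the contraction argument for (b) $\Rightarrow$ (3FC): one must verify that the terminal $H'$ is $3$-flow-critical in the exact sense used by the paper (bridgeless and nontrivial, so that (b) applies meaningfully) and that the degree of a contracted vertex in $H'$ equals the size of the corresponding edge cut in the original $H$. Both are standard properties of contraction minors, and once granted, the whole proof is a clean translation of Kochol's two reformulations through the essential $4$-edge-connectedness supplied by Theorem~\ref{THM: prop3f}(ii).
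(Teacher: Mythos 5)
Your proof is correct and follows essentially the route the paper intends: the paper gives no detailed argument for this theorem, merely noting that Kochol's two reformulations combine with Theorem \ref{THM: prop3f} to yield it, and your cycle (3FC) $\Rightarrow$ (c) $\Rightarrow$ (a) $\Rightarrow$ (b) $\Rightarrow$ (3FC) supplies exactly those details — the correspondence between $3$-edge-cuts and $3$-vertices via $3$-edge-connectivity and essential $4$-edge-connectivity, and the contraction of a hypothetical $5$-edge-connected counterexample down to a $3$-flow-critical graph. The only glossed points (e.g.\ ruling out two-vertex graphs, which also follows since $3$-flow-critical graphs are $Z_3$-reduced and hence have no parallel edges by Lemma \ref{lem1}(iii)) are routine and easily supplied.
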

It is proved in \cite{{HanLL16}} that every $Z_3$-reduced graph has a vertex of degree at most $5$, and so, combining Theorem \ref{THM: prop3f}(iii), it implies that every $3$-flow-critical graph contains a vertex of degree at most $5$. 

Theorem \ref{Kocholthm}  may suggest that some better structure properties of $3$-flow-critical graphs could bring new ideas in solving Tutte's $3$-flow conjecture. In particular, Theorem \ref{Kocholthm} shows that Conjecture \ref{CONJ: 2.5n} implies Tutte's $3$-flow conjecture.

Next, we show in details that $K_{3, n-3}^+$ is a $3$-flow-critical graph and that Conjecture \ref{CONJ: 3n} holds for any $3$-flow-critical graph $G$ with $|V_3(G)|\ge |V(G)|-3$ and $|V(G)|\geq9$.
\begin{proposition}\label{k3c}
 For any $n\geq6$, the graph $K_{3, n-3}^+$ is a $3$-flow-critical graph with $3n-8$ edges.
\end{proposition}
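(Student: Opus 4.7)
The plan. The edge count is immediate: $K_{3,n-3}$ has $3(n-3)$ edges, so $K_{3,n-3}^+$ has $3n-8$ edges. For the rest, denote the small side by $\{a,b,c\}$ with extra edge $ab$, the large side by $\{v_1,\ldots,v_{n-3}\}$, so each $v_i$ has degree $3$, and work throughout with modulo $3$-orientations in place of nowhere-zero $3$-flows.

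To show $K_{3,n-3}^+$ admits no modulo $3$-orientation: each degree-$3$ vertex $v_i$ must be either a \emph{source} (all three edges out) or a \emph{sink} (all three edges in), so let $s$ be the number of sources. Writing $d^+-d^-\equiv 0\pmod 3$ at $c$ (whose neighbors are exactly the $v_i$'s) gives $s\equiv -n\pmod 3$, whereas the balance at $b$ forces $s\equiv -n+\delta\pmod 3$ with $\delta\in\{1,2\}$ according to the orientation of $ab$. These two congruences conflict for both values of $\delta$, so no modulo $3$-orientation of $K_{3,n-3}^+$ exists.

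For the existence of a modulo $3$-orientation of $G/e$ for each edge $e$: by the symmetries of $K_{3,n-3}^+$ (swapping $a\leftrightarrow b$ and permuting the $v_i$), three cases suffice. When $e=ab$, $G/e$ is a multigraph with two parallel edges from the merged vertex $w$ to each $v_j$, and both $w$ and $c$ impose $s\equiv -n\pmod 3$, which admits a value in $\{0,\dots,n-3\}$. When $e=v_1c$, the merged vertex $w$ together with $a$ and $b$ yield three balance equations (one redundant) in the three Boolean orientation indicators of $ab$, $aw$, $bw$ and the source-count $s$; they are solvable for some $s\in\{0,\dots,n-4\}$. When $e=v_1a$, the contraction creates two parallel edges between the merged vertex $w$ and $b$; the extra orientation freedom in these edges (they can contribute $0$ or $\pm 2$ to the $w$- and $b$-balances) is exactly what resolves the modular obstruction from the nonexistence argument, yielding a modulo $3$-orientation of $G/e$.

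In each case the admissible range of $s$ contains at least three consecutive integers whenever $n\ge 6$, so any residue class modulo $3$ is attainable. The main obstacle is the case $e=v_1a$: one must set up the balance equations carefully and use the two parallel edges to eliminate the obstruction that prevents $G$ itself from having a modulo $3$-orientation, but the ensuing arithmetic is routine modulo $3$.
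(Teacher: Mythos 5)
Your proposal is correct, but it executes the criticality part differently from the paper. Both proofs share the same skeleton: the edge count is immediate, the non-existence of a modulo $3$-orientation is derived from the fact that every degree-$3$ vertex must be a full source or full sink (your source-count $s$ at $c$ versus $b$ is just a more explicit version of the paper's remark that the extra edge $ab$ shifts the residue at one hub), and criticality is checked on the three edge orbits $ab$, $av_1$, $cv_1$. The difference is in how $G/e$ is handled: the paper observes that a modulo $3$-orientation of $G-e$ induces one of $G/e$ (they have the same edge set and the merged vertex inherits the sum of two zero balances), and then builds explicit orientations of $G-e$ from a parametrized orientation $D(k)$ of $K_{3,t-3}$ in which all three hubs have boundary $k$ and the small-side vertices are balanced; you instead orient the contracted multigraphs directly, treating the source count $s$ among the surviving degree-$3$ vertices as a free parameter and solving the two or three remaining balance congruences at the hubs (using the $0,\pm2$ freedom of the parallel pair created by contracting $av_1$ or $ab$). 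Your route is more uniform and mechanical -- each case reduces to checking that a small congruence system in $s$ and a few $\pm1$ edge indicators is solvable, and since $s$ ranges over at least three consecutive integers for $n\ge 6$ every residue of $s$ is available -- while the paper's deletion trick avoids multigraph bookkeeping and yields completely explicit orientations. The congruence checks you label ``routine'' do go through: for $e=cv_1$ the system forces $s\equiv 1-n\pmod 3$ (with the unique sign pattern on $ab, aw, bw$), and for $e=av_1$ any $s\not\equiv 1-n\pmod 3$ works, the parallel pair to $b$ absorbing the residue exactly as you say; spelling out these two computations is all that is needed to make your sketch a complete proof.
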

\begin{proof}
It is easy to check that  $K_{3, n-3}^+$ has $3n-8$ edges. So it remains to show that $K_{3, n-3}^+$ is $3$-flow-critical. We use notation in Figure 1 to label the vertices of $K_{3, n-3}^+$, and let $X=\{x_1,x_2,x_3\}$ and $Y=\{y_1,y_2,\ldots,y_{n-3}\}$. To the contrary, suppose $K_{3, n-3}^+$ admits a modulo $3$-orientation $D$. Since  all vertices in $Y$ are 3-vertices,  we have $d^+_D(y_i)=3$ or $d^-_D(y_i)=3$ for each $y_i\in Y$. And it is easy to check that $d^+_{D}(x_1) - d^-_{D}(x_1)\not\equiv 0\pmod3$ if $d^+_{D}(x_3) - d^-_{D}(x_3)\equiv 0\pmod3$, since $x_1$ has an extra neighbor $x_2$.  Hence $K_{3, n-3}^+$  does not admit a modulo $3$-orientation. For any $e\in E(K_{3, n-3}^+)$, in order to show that $G'=K_{3, n-3}^+/e$ has a modulo $3$-orientation, it is sufficient to prove that $G''=K_{3, n-3}^+-e$ has a modulo $3$-orientation.

We firstly give a special orientation of the complete bipartite graph $K_{3, t-3}$ with $t\geq5$. Let $X=\{x_1,x_2,x_3\}$ and $Y=\{y_1,y_2,\ldots,y_{t-3}\}$ be the two parts of $K_{3, t-3}$. Assign to each edge incident to $x_1$ a direction such that $d^+(x_1)-d^-(x_1) \equiv k \pmod 3$. And assign directions to the remain edges such that $d^+(v)-d^-(v) \equiv 0 \pmod 3$ for each $v\in Y$. Then we obtain an orientation $D(k)$ of $K_{3, t-3}$
such that $d_{D(k)}^+(u)-d_{D(k)}^-(u) \equiv k \pmod 3$ for each $u\in X$, and $d_{D(k)}^+(v)-d_{D(k)}^-(v) \equiv 0 \pmod 3$ for each $v\in Y$.

Now by symmetry, we consider three cases $e=x_1x_2$, $e=x_1y_1$ and $e=x_3y_1$.
If $e=x_1x_2$, then $G''\cong K_{3, n-3}$. So $G''$ has a modulo $3$-orientation $D(k)$ with $k=0$.
If $e=x_1y_1$, then $G_1=G''-y_1-\{x_1x_2\}$ is isomorphic to $K_{3, n-4}$. So $G_1$ has an orientation $D(k)$ with $k=1$. With the restriction of $D(1)$ on $G''$, we obtain a  modulo $3$-orientation of $G''$ by assigning $x_2\rightarrow x_1$, $x_2\rightarrow y_1$ and $y_1\rightarrow x_3$. If $e=x_3y_1$, then $G_1=G''-y_1-\{x_1x_2\}$ is isomorphic to $K_{3, n-4}$. So $G_1$ has an orientation $D(k)$ with $k=0$. With the restriction of $D(0)$ on $G''$, we obtain a  modulo $3$-orientation of $G''$ by assigning $x_1\rightarrow x_2$, $x_2\rightarrow y_1$ and $y_1\rightarrow x_1$.

Thus, for all cases above, we can obtain a modulo $3$-orientation of $G''$. Hence we conclude that $K_{3, n-3}^+$ is $3$-flow-critical.
\end{proof}

\begin{proposition}\label{up3n-8}
  Let $G$ be a $3$-flow-critical graph on $n\ge 9$ vertices. If $|V_3(G)|\ge n-3$, then $$|E(G)|\le 3n-8.$$
  Moreover, the equality holds if and only if $G\cong K_{3, n-3}^+$.
\end{proposition}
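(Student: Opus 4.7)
The plan is to combine Theorem \ref{THM: prop3f} with elementary edge-counting, using that even wheels are $Z_3$-connected (Lemma \ref{lem-wheel}) to forbid dense configurations inside the $3$-flow-critical graph $G$.

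First, I observe that $G$ is simple: since $2K_2$ is $Z_3$-connected by Lemma \ref{lem1}(iii), the $Z_3$-reducedness in Theorem \ref{THM: prop3f}(iii) forbids any pair of parallel edges. Next, by Theorem \ref{THM: prop3f}(iv), $G[V_3]$ is a forest unless $G$ is an odd wheel, and the odd-wheel case is immediate since $W_k$ has $|E(W_k)| = 2(n-1) < 3n-8$ for $n \ge 7$ (and $W_k \not\cong K_{3,n-3}^+$). So henceforth I assume $G[V_3]$ is a forest. Let $S = V_{\ge 4}(G)$, so $|S| = n - |V_3(G)| \le 3$, and put $x = |E(G[S])|$ and $z = |E(G[V_3])|$. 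A double count of edges incident to $V_3$ yields the key identity
$$|E(G)| = 3(n - |S|) + x - z.$$

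I then dispatch small $|S|$. If $|S| = 0$, then $G$ is cubic and $G[V_3] = G$ is a forest, absurd. If $|S| = 1$, each $t \in V_3$ has at least two of its three neighbours inside $V_3$, forcing $2z \ge 2(n-1)$ which contradicts $z \le n-2$ for a forest on $n-1$ vertices; hence $G[V_3]$ cannot be a forest, so $G$ must be an odd wheel, already handled. If $|S| = 2$, the bound $\sum_{v \in S} d(v) \le 2(n-1)$ gives $|E(G)| \le (5n-8)/2 < 3n-8$ for $n \ge 9$.

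The heart of the argument is $|S| = 3$, where the identity becomes $|E(G)| = 3n - 9 + (x - z)$, so it suffices to show $x - z \le 1$ with equality forcing $G \cong K_{3,n-3}^+$. Writing $m$ for the number of $t \in V_3$ with $N(t) = S$, the handshake identity $\sum_{t \in V_3} \deg_{G[V_3]}(t) = 2z$ gives $m \ge n - 3 - 2z$. The key observation is that whenever $x \ge 2$ and $m \ge 2$, some $v \in S$ is adjacent in $G[S]$ to both vertices of $S \setminus \{v\} = \{u, w\}$ (the middle of the path when $x = 2$, any vertex when $x = 3$); then, picking two $t_1, t_2 \in V_3$ with $N(t_i) = S$, the hub $v$ together with the $4$-cycle $u\,t_1\,w\,t_2$ forms a $W_4$-subgraph of $G$, which is $Z_3$-connected by Lemma \ref{lem-wheel}, contradicting $Z_3$-reducedness. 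For $n \ge 9$, the bound $m \ge n - 3 - 2z \ge 2$ holds in each of the cases $(x, z) \in \{(2,0), (3,0), (3,1), (2,1), (3,2)\}$, which are exactly the configurations with $x \ge 2$ and $x - z \ge 1$. Ruling all of them out yields $|E(G)| \le 3n - 8$ and pins equality to $(x, z) = (1, 0)$: a single edge in $G[S]$ together with every $V_3$-vertex adjacent to all of $S$, reconstructing $G \cong K_{3,n-3}^+$.

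The principal technical obstacle is verifying that the $W_4$-embedding is available across all configurations with $x \ge 2$ and $x - z \ge 1$. The binding case is the equality candidate $(x, z) = (3, 2)$, which needs $m \ge n - 7 \ge 2$; this is precisely what anchors the hypothesis $n \ge 9$ in the statement.
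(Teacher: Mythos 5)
Your proof is correct and follows essentially the same route as the paper: reduce to the case $|V_{\ge 4}|=3$ via Theorem \ref{THM: prop3f} (simplicity, the forest structure of $G[V_3]$, and the odd-wheel exception), count edges through the forest, and kill the dense configurations by exhibiting a $W_4$ subgraph on one vertex of $V_{\ge 4}$ and two $3$-vertices with all neighbours in $V_{\ge 4}$, contradicting $Z_3$-reducedness via Lemma \ref{lem-wheel}. The only differences are cosmetic (you parametrize by $x=|E(G[S])|$, $z=|E(G[V_3])|$ instead of the paper's component count, and handle $|V_{\ge 4}|\le 2$ by a max-degree bound), so the arguments match in substance.
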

\begin{proof}
By Lemma \ref{lem1} and Theorem \ref{THM: prop3f}(iii), $G$ contains no parallel edges. We consider three cases in the following. Firstly, suppose $|V_3|\ge n-1$.  By Theorem \ref{THM: prop3f}(iv), the graph $G$ is an odd wheel and $|E(G)|\leq 2n-2$, which is less than $3n-8$ when $n\geq 9$. Then suppose $|V_3|= n-2$. Assume $G[V_3]$ has $t$ components. By Theorem \ref{THM: prop3f}(iv), we know $G[V_3]$ is a forest, and hence $|E(G)|=|E(G[V_3])|+|[V_3,V_{\geq4}]|+|E(G[V_{\geq4}])|\leq (n-2-t)+(3(n-2)-2(n-2-t))+1=2n+t-3$. Since $G$ has no parallel edges and $G[V_3]$ has no isolated vertex, we obtain
$t\leq \lfloor \frac{n-2}{2} \rfloor$, which implies $|E(G)|<3n-8$ by $n\geq9$.

Finally, suppose $|V_3|=n-3$. Let $i=|E(G[V_{\geq4}])|$ and $V_{\geq4}=\{u_1,u_2,u_3\}$. Then $t\leq n-3$ and $0\leq i\leq3$. So we have $|E(G)|\leq (n-3-t)+(3(n-3)-2(n-3-t))+i=2n+t+i-6$. If $t+i\leq n-3$, then $|E(G)|\leq 3n-9$. Now we consider the case $t+i\geq n-2$, whereas $i\geq1$.  If $i=1$, then $t=n-3$ and  $G=K_{3, n-3}^+$. If $2\leq i\leq 3$, then $t\geq n-5$ and we assume $\{u_1u_2,u_2u_3\}\subseteq E(G[V_{\geq4}])$ by symmetry. Since $n\geq 9$, there are at least two vertices, denoted by $v_1$ and $v_2$,  are isolated in $G[V_3]$.
We use $H$ to denote the graph induced by $\{v_1,v_2,u_1,u_2,u_3\}$. Let $H'=H$ if $u_1u_3\notin E(G)$ and $H'=H-u_1u_3$ if $u_1u_3\in E(G)$. So $H'$ is a wheel $W_4$ and is $Z_3$-connected by Lemma \ref{lem-wheel}, which contradicts Theorem \ref{THM: prop3f}(iii).
Hence $K_{3, n-3}^+$ is the only extreme graph to attain the bound.
\end{proof}

Now we apply Theorem \ref{THM: prop3f} and a counting argument to obtain the lower bound in Theorem \ref{THM: main4n}. Since for an odd wheel $W_{n-1}$ we have $|E(W_{n-1})|=2n-2\geq\frac{8n+2}{5}$ if $n\geq5$, it suffices to prove the following theorem.
\begin{theorem}\label{THM: mainlower}
  For any $3$-flow-critical graph $G$ on $n$ vertices other than an odd wheel,
  $$|E(G)|\ge \frac{8n+2}{5}.$$
\end{theorem}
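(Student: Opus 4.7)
My approach is a discharging argument. Set the initial charge $\mu(v) := 5d(v) - 16$, so that $\sum_v \mu(v) = 10|E(G)| - 16n$ and the target $|E(G)| \geq (8n+2)/5$ is equivalent to $\sum_v \mu(v) \geq 4$. Note that $\mu(v) = -1$ for every $3$-vertex while $\mu(v) \geq 4$ for every vertex of degree at least $4$, so all the charge deficit lives on $V_3(G)$.

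I apply the single rule that each $u \in V_{\geq 4}(G)$ sends $1$ unit of charge to each of its $V_3$-neighbors. Then every $u \in V_{\geq 4}$ retains $\mu'(u) = 5d(u) - 16 - m(u) \geq 0$, where $m(u) := |N(u) \cap V_3|$, with equality iff $u$ is a \emph{saturated $4$-vertex} (i.e., $d(u) = m(u) = 4$). Each $v \in V_3$ ends with $\mu'(v) = 2 - d_T(v)$, where $T := G[V_3]$. By Theorem \ref{THM: prop3f}(iv), $T$ is a forest, so for each tree-component $T_i$ the identity $|V(T_i)| - |E(T_i)| = 1$ gives $\sum_{v \in T_i}(2 - d_{T_i}(v)) = 2|V(T_i)| - 2|E(T_i)| = 2$. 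Hence $\sum_{v \in V_3} \mu'(v) = 2c$ where $c$ is the number of tree components of $T$, and the total charge is $2c + \sum_{u \in V_{\geq 4}} \mu'(u)$. If $V_3 = \emptyset$ or $c \geq 2$, the total is already at least $4$.

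So assume $c = 1$; I must show $\sum_{u \in V_{\geq 4}} \mu'(u) \geq 2$. An easy case analysis shows this holds unless every $u \in V_{\geq 4}$ is a saturated $4$-vertex, giving the \emph{tight bad case}: $|E_3| = 0$ and $|E_2| = 4n_{\geq 4} = n_3 + 2$. (Otherwise some $u$ has $d(u) \geq 5$ forcing $\mu'(u) \geq 4$, or a $4$-vertex has at most $2$ $V_3$-neighbors giving $\mu'(u) \geq 2$, or $|E_3| \geq 1$ provides two non-saturated $V_{\geq 4}$-vertices each contributing $\mu' \geq 1$.) To exclude the tight bad case I invoke Theorem \ref{THM: prop3f}(i), which guarantees a mod-$3$ orientation of $G - e$ for every $e \in E(G)$. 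Let $A \cup B$ be the bipartition of the unique tree $T$ and set $a(u) := |N(u) \cap A|$ for each $u \in V_{\geq 4}$. In any mod-$3$ orientation of $G$ every $3$-vertex must be a source or sink (forced up to a global swap by the bipartition of $T$), and then balancing a $4$-vertex $u$ forces $a(u) = 2$. If all $a(u) = 2$ the orientation exists and $G$ is not $3$-flow-critical. Otherwise some $u_0$ has $a(u_0) \neq 2$, and I plan to exhibit an edge $e$ for which $G - e$ also admits no mod-$3$ orientation, contradicting Theorem \ref{THM: prop3f}(i). The choice depends on the $a$-values: when some $a(u) \in \{1, 2, 3\}$, the aim is to make the $V_3(G-e)$-induced subgraph non-bipartite under every choice of component-wise bipartitions of $T$ minus the deleted endpoint; when every $a(u) \in \{0, 4\}$, the identity $4n_{\geq 4} = |A| + |B| + 2$ together with $\sum_{u \in S_4} a(u) = 4|S_4|$ and the symmetric relation for $S_0 := \{u : a(u) = 0\}$ forces, by pigeonhole, some $v \in A$ (or $B$) to be adjacent to at least two vertices in $S_4$ (resp.\ $S_0$); deleting one of those edges leaves a second saturated $4$-vertex whose balance equation is unsolvable. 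The main obstacle is the first subcase, which requires careful bookkeeping of how independent component-bipartitions of $T$ minus the deleted endpoint interact with the $a$-values at the unaffected $V_{\geq 4}$-vertices.
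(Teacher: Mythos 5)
Your discharging step is sound and is essentially a repackaging of the paper's counting argument: it yields $|E(G)|\ge \frac{8n+1}{5}$ and correctly isolates the same tight configuration (the graph $G[V_3]$ is a single tree $T$, and $V_{\ge 4}$ is an independent set of ``saturated'' $4$-vertices). Your observation that any modulo $3$-orientation forces the source/sink pattern to follow the bipartition $(A,B)$ of $T$ and hence forces $a(u)=2$ at every $4$-vertex is also correct, and the pigeonhole subcase where every $a(u)\in\{0,4\}$ can indeed be closed as you indicate (the deleted endpoint $v$ is a leaf of $T$, so $T-v$ is still a tree, the bipartition is still forced up to a swap, and the second saturated neighbor $u_2$ cannot be balanced).

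The genuine gap is the remaining subcase, which is precisely the step that lifts $\frac{8n+1}{5}$ to $\frac{8n+2}{5}$, and you only state an ``aim'' for it while acknowledging an unresolved obstacle. Moreover, the aim as stated is doubtful: if $a(u_0)=3$ and you delete the edge to the unique $B$-neighbor, the subgraph induced by the $3$-vertices of $G-e$ stays bipartite (paths in $T$ between two $A$-vertices have even length, so every new cycle through $u_0$ is even); if you instead delete an edge to an $A$-neighbor $w_1$, an odd cycle appears only when the appropriate neighbors of $u_0$ lie in the same component of $T-w_1$, which need not happen, and once $T-w_1$ splits, the source/sink patterns of its components can be chosen independently, so the contradiction must come from balance failures at other $V_{\ge 4}$-vertices under \emph{every} combination of component bipartitions --- exactly the bookkeeping you defer. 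Your case split also leaves uncovered the mixed situation in which all $a$-values lie in $\{0,2,4\}$ but not all equal $2$ (your pigeonhole identity needs every value in $\{0,4\}$). The paper closes this step by a different, complete argument: it takes a leaf $x_1$ of $T$, its degree-$4$ neighbor $y$ with neighbors $x_1,x_2,x_3,x_4\in V_3$, and uses the parities of the tree paths $P_{ij}$ between these neighbors, deleting $yx_4$, $yx_2$, or $zx_1$ and invoking Theorem \ref{THM: prop3f}(i) to reach a contradiction in each of two cases. Until you supply an argument of comparable completeness for your first subcase, your proposal establishes only $|E(G)|\ge \frac{8n+1}{5}$, not the claimed bound.
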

\begin{proof}
  We count the number of edges in $[V_3, V_3^c]$ with two aspects to build an inequality.

  On one hand, since $G[V_3]$ contains no cycle by Theorem \ref{THM: prop3f}(iv), we have
  \begin{eqnarray*}
    |[V_3, V_3^c]|=3|V_3|-2|E(G[V_3])|
    \ge 3|V_3|-2(|V_3|-1)=|V_3|+2.
  \end{eqnarray*}

  On the other hand, it follows from $V_3^c=V_{\ge 4}$ that
  \begin{eqnarray*}
    |[V_3, V_3^c]|&=&\sum_{k\ge 4}k|V_k|-2|E(G[V_{\ge 4}])|
    \le\sum_{k\ge 3}k|V_k| - 3|V_3|=2|E(G)|-3|V_3|.
  \end{eqnarray*}
  Hence we have $2|E(G)|-3|V_3|\ge |V_3|+2$, and so $2|E(G)|\ge 4|V_3|+2$. Therefore, we have
  \begin{eqnarray*}
    |E(G)|&=&\frac{8|E(G)|}{10}+\frac{2|E(G)|}{10}\\
    &\ge&\frac{4}{10}\sum_{k\ge 3}k|V_k|+\frac{1}{10}(4|V_3|+2)\\
    &\ge&\frac{8}{5}\sum_{k\ge 3}|V_k| +\frac{1}{5}\\
    &=& \frac{8n+1}{5}.
  \end{eqnarray*}
  To obtain the bound $\frac{8n+2}{5}$ in the theorem, we shall show that $|E(G)|\neq \frac{8n+1}{5}$ below.

 Suppose to the contrary that $|E(G)|= \frac{8n+1}{5}$. Then all the inequalities above are equalities. In particular, we have that $G[V_3]$ is a tree and $V_{\ge 4}=V_4$ is an independent set. Let $x_1$ be a leaf vertex of the tree $G[V_3]$, and let $y$ be a neighbor of $x_1$ with degree $4$. Suppose the neighbors of $y$ are $x_1,x_2,x_3,x_4$, where $x_i\in V_3$ for each $i\in\{1,2,3,4\}$. Since $G[V_3]$ is a tree, there is a unique path, say $P_{ij}$, connecting the vertices $x_i$ and $x_j$ in $G[V_3]$.
Then by symmetry, we consider two cases as follows.

 {\em Case 1.}\ $x_2\in V(P_{13})$ but $x_4\notin  V(P_{13})$.

Let $G'=G-yx_4$. Since $G$ is 3-flow-critical, by Theorem2.1(i),  we have that  $G'$ admits a  modulo $3$-orientation $D'$. This implies that $d_{D'}^+(y)=3$ or $d_{D'}^-(y)=3$.  Thus $|V(P_{13})|$ is odd since all vertices in the path $P_{13}$ belong to $V_3$. Let $G''=G-yx_2$. By Theorem \ref{THM: prop3f}(i), $G''$ has a modulo $3$-orientation $D''$ and has $d_{D''}^+(y)=3$ or $d_{D''}^-(y)=3$. However, the edges $yx_1$ and $yx_3$ must have opposite directions in $D''$ since $|V(P_{13})|$ is odd and $d_{G''}(x_2)=2$, i.e., $y\rightarrow x_1$ if $x_3\rightarrow y$ and $y\rightarrow x_3$ if $x_1\rightarrow y$. This is a contradiction.

 {\em Case 2.}\ $x_i\notin V(P_{jk})$ for any $\{i,j,k\} \subseteq \{1,2,3,4\}$.

By  {Case 1}, we know $|V(P_{ij})|$ is an odd number for each index pair $\{i,j\} \subseteq \{1,2,3,4\}$. Since $x_1$ is a leaf of the tree $G[V_3]$, there is a neighbor  $z$ of $x_1$ such that $z\neq y$ and $z\in V_4$. Let $G'=G-zx_1$.  Since $G$ is 3-flow-critical, we have that  $G'$ admits a  modulo $3$-orientation $D'$. Since $|V(P_{23})|$ and $|V(P_{34})|$ are both odd and $V(P_{23})\cup V(P_{34})\subseteq V_3$, we have that the
edges $yx_2$, $yx_3$ and $yx_4$  are all leaving or all entering  $y$ in $D'$. It implies that $d^+_{D'}(y)\geq3$ or $d^-_{D'}(y)\geq3$. Then we know $d^+_{D'}(y) - d^-_{D'}(y)\not\equiv 0\pmod3$ since $d_{G'}(y)=4$, a contradiction.
\end{proof}

\section{Upper Bounds and $Z_3$-reduced Graphs}
In this section, we develop a  method to prove an  upper bound for $3$-flow-critical graphs, which is tight for $K_4$.
The following definition is motivated by the theorem of Nash-Williams \cite{Nash61} and Tutte \cite{Tutte61} on spanning tree packing.
\begin{definition}
  Let ${\mathcal X}=\{X_1, X_2,\dots, X_t\}$ be a partition of $V(G)$. Define $$\rho_G({\mathcal X})=\sum_{i=1}^{t}d_G(X_i)-8t+20$$
  and
  $$\rho(G)=\min\{\rho_G({\mathcal X}): {\mathcal X}\ is\ a\ partition\ of\ V(G)\},$$
  where  $d_G(X_i)=|[X_i,X_i^c]_G|$.
\end{definition}

For a graph $G$ with few vertices, it is easy to determine $\rho(G)$. For example, $\rho(K_2)=6, \rho(K_3)=2, \rho(P_3)=0$, and $\rho(K_4)=0$. Note that for these  graphs, $\rho(G)$ is attained only by the trivial partition, which is a partition with exact one vertex in each part.

  For a partition ${\mathcal X}=\{X_1, X_2,\dots, X_t\}$  of $V(G)$, let $G/\mathcal{X}$ be the graph obtained by identifying all vertices in each $X_i$ to form a new vertex $x_i$.  We say a graph $G$ is {\em $Z_3$-reduced} to a graph $H$ if $H$ is obtained from $G$ by contracting all its $Z_3$-connected subgraphs consecutively. In other words, there exists a partition ${\mathcal X}=\{X_1, X_2,\dots, X_t\}$  of $V(G)$ such that  $G/{\mathcal X}=H$ and $G[X_i]$ is $Z_3$-connected for each $i\le t$ (possibly $G[X_i]=K_1$).

  The main result of this section is  the following theorem.

\begin{theorem}\label{THM: main}
  Let $G$ be a  connected graph with $\rho(G)\ge 0$. Then either\\
  (i) $G$ is $Z_3$-connected, or\\
  (ii) $G$ can be $Z_3$-reduced to one of the graphs $K_2, K_3, P_3, K_4$.
\end{theorem}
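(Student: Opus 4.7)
The plan is to induct on $|V(G)|+|E(G)|$. First I will establish that contracting a $Z_3$-connected subgraph cannot decrease $\rho$: if $L\subseteq G$ is $Z_3$-connected with $|V(L)|\ge 2$ and $G'=G/L$, then every partition $\mathcal{X}'$ of $V(G')$ lifts to a partition $\mathcal{X}$ of $V(G)$ of the same size by expanding the contracted vertex $v_L$ back to $V(L)$. Edges inside $L$ remain inside a single part, so the set of crossing edges and the number of parts are unchanged, giving $\rho_{G'}(\mathcal{X}')=\rho_G(\mathcal{X})\ge \rho(G)\ge 0$; hence $\rho(G')\ge 0$.

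With this monotonicity, the inductive step is immediate. If $G$ contains a nontrivial $Z_3$-connected subgraph $L$, apply the inductive hypothesis to $G'=G/L$. Either $G'$ is $Z_3$-connected, in which case Lemma~\ref{lem1}(ii) applied to $L$ and $G/L=G'$ (both $Z_3$-connected) yields that $G$ itself is $Z_3$-connected (case~(i)); or $G'$ reduces to some $H\in\{K_2,K_3,P_3,K_4\}$, in which case $G$ reduces to the same $H$ (case~(ii)). Therefore we may henceforth assume $G$ is itself $Z_3$-reduced, and the task becomes to show $G\in\{K_2,K_3,P_3,K_4\}$.

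Since $2K_2$ is $Z_3$-connected by Lemma~\ref{lem1}(iii), $G$ is simple. Specializing $\rho(G)\ge 0$ to the trivial partition into singletons yields $|E(G)|\ge 4n-10$ with $n=|V(G)|$. A direct enumeration of simple connected graphs meeting this density bound for $n\le 4$ produces precisely $K_2$, $K_3$, $P_3$, $K_4$, and each of these is readily checked to be $Z_3$-reduced with $\rho\ge 0$.

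The main obstacle is excluding $n\ge 5$. My plan is to show that any simple graph on $n\ge 5$ vertices with $|E(G)|\ge 4n-10$ contains a nontrivial $Z_3$-connected subgraph, contradicting $Z_3$-reducedness. For $n=5$ the only candidate is $K_5$, in which any $4$-cycle together with the remaining vertex as center forms a copy of the $Z_3$-connected wheel $W_4$ (Lemma~\ref{lem-wheel}). For $n\ge 6$, I would invoke the fact recalled in the paragraph following Theorem~\ref{Kocholthm} that every $Z_3$-reduced graph has a vertex $v$ of degree at most $5$: when $d(v)\le 4$ the density bound $|E|\ge 4(n-1)-10$ is preserved upon deleting $v$ and induction applies, while the sharp boundary case $d(v)=5$ loses the inequality by a single edge and requires a more delicate argument---either locating an even wheel in the local neighborhood of $v$ or choosing a nontrivial partition grouping $v$ with several of its neighbors to witness $\rho_G(\mathcal{X})<0$ directly. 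The technical heart of the proof is this $d(v)=5$ boundary case, where the trivial peeling argument narrowly fails and the full strength of $\rho(G)\ge 0$ across non-trivial partitions must be invoked.
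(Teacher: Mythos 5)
Your opening reductions are sound and essentially coincide with the paper's: $\rho$ cannot decrease when a $Z_3$-connected subgraph is contracted, so a minimal counterexample may be assumed $Z_3$-reduced; simplicity follows from Lemma~\ref{lem1}(iii); the trivial partition gives $|E(G)|\ge 4|V(G)|-10$; and the cases $|V(G)|\le 4$, together with $|V(G)|=5$ via $W_4\subseteq K_5$ and Lemma~\ref{lem-wheel}, go through as you say. The difficulty is that everything after this point --- which is the actual content of the theorem --- is not proved.

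The statement you reduce to, namely that every simple graph on $n\ge 6$ vertices with at least $4n-10$ edges contains a nontrivial $Z_3$-connected subgraph, retains only the trivial-partition consequence of $\rho(G)\ge 0$, and your proposed induction does not establish it. The cited minimum-degree fact only guarantees a vertex $z$ with $d(z)\le 5$, and when $d(z)=5$ deletion leaves $4(n-1)-11$ edges, one below the threshold; neither suggested repair is available. Plain vertex deletion does not preserve the hypothesis $\rho\ge 0$ (adding $\{z\}$ as a singleton part only yields $\rho_{G-z}(\mathcal{X})\ge \rho_G(\mathcal{X}\cup\{\{z\}\})-2$ when $d(z)=5$), and in your induction the smaller graphs carry no $\rho$-hypothesis at all, only an edge count, so there is no partition whose negativity could be exhibited; likewise nothing forces an even wheel near a degree-$5$ vertex of a dense $Z_3$-reduced graph. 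In fact the only available proof of your intermediate claim is as a corollary of Theorem~\ref{THM: main} itself (this is exactly how the $4n-10$ upper bound in Theorem~\ref{THM: main4n} is derived in the paper), so the plan is close to circular. The paper closes precisely this case with machinery absent from your sketch: from $\rho\ge 0$ over nontrivial partitions it proves that a minimal counterexample is $4$-edge-connected and essentially $7$-edge-connected; Theorem~\ref{thmLTWZ} (every $6$-edge-connected graph is $Z_3$-connected) then forces a vertex $z$ with $4\le d_G(z)\le 5$; and instead of deleting $z$ it splits, forming $G'=G-z+v_1v_2$, verifies that $G'$ is still $4$-edge-connected (so $G'$ cannot reduce to $K_2,K_3,P_3,K_4$) and still satisfies $\rho(G')\ge 0$ (the added edge $v_1v_2$ supplies exactly the compensation that deletion lacks), concludes by minimality that $G'$ is $Z_3$-connected, and lifts this back to $G$ by Lemma~\ref{lem2}. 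Without the splitting lemma, the essential-edge-connectivity estimates, and Theorem~\ref{thmLTWZ}, the $d(z)=5$ boundary case --- which you correctly identify as the heart --- remains open, so the proposal has a genuine gap.
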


\begin{proof}
Assume, by way of contradiction, the result is false and study a minimal counterexample $G$ with respected to $|V(G)|+|E(G)|$. That is, $G$ is not $Z_3$-connected and $G$ cannot be $Z_3$-reduced to one of the graphs $K_2, K_3, P_3, K_4$. We first present some preliminary reductions on $G$.

  \begin{claim}\label{CL: noZ3subgraph}
    $G$ is  $Z_3$-reduced and $|V(G)|\ge 7$.
  \end{claim}

 \begin{proof}
 Suppose to the contrary that there exists a subgraph $H$ of $G$ such that $H$ is $Z_3$-connected, where $|V(H)|>1$. Since $G$ is a minimal counterexample,  we consider two cases as follows. If $G/H$ is $Z_3$-connected, then by Lemma \ref{lem1}, $G$ is $Z_3$-connected, a contradiction. If $G/H$ can be $Z_3$-reduced to one of the graphs $K_2, K_3, P_3, K_4$, then by definition $G$ is $Z_3$-reduced to one of the graphs $K_2, K_3, P_3, K_4$. Each case leads to a contradiction. Hence $G$ is $Z_3$-reduced and contains no nontrivial $Z_3$-connected subgraph.

  Clearly, we have $|V(G)|\ge 3$. It is routine to verify that $|V(G)|\ge 7$ by some case analysis, but we shall apply a basic fact in \cite{LLW18} to accomplish this work. By Lemma 2.10 in \cite{LLW18}, when $n=3,4,5,6$, any $Z_3$-reduced graph on $n$ vertices contain at most $3,6,8,11$ edges, respectively. As $\rho(G)\ge 0$, $G$ contains at least $2,6,10,14$ edges when $|V(G)|=3,4,5,6$, respectively. Thus either $G\in \{K_3, P_3, K_4\}$ or $G$ is not $Z_3$-reduced, a contradiction. This shows $|V(G)|\ge 7$.
 \end{proof}

For a partition $\mathcal{X}=\{X_1,X_2,\ldots, X_t\}$ of $V(G)$, if $G[X_i]=K_a$ is a complete graph for some $i$ and $|X_k|=1$ for each $k\neq i$, where $a$ is a positive integer, then we call $\mathcal{X}$ a {\em $K_a$-trivial partition}. A $K_1$-trivial partition is simply a trivial partition.

\begin{claim}\label{rhoH}  Let $H$ be a proper subgraph of $G$ with $|V(H)|>1$. Assume that $\rho_H({\mathcal Y})\ge 7$ for any nontrivial partition ${\cal Y}$ of $H$. Let ${\mathcal Z}$ denote the trivial partition of $H$. Then each of the following holds.\\
(i)  The trivial partition ${\mathcal Z}$ of $H$ satisfies $\rho_H({\mathcal Z})\le 6$.\\
(ii) If $\rho_H({\mathcal Z})\ge 3$, then $H\cong K_2$.\\
(iii) If $\rho_H({\mathcal Z})\ge 1$, then $H\in \{K_2,K_3\}$.
  \end{claim}

\begin{proof} Since $G$ is a minimal counterexample to Theorem \ref{THM: main}, the theorem is applied for its proper subgraph $H$.  Assume that the trivial partition ${\mathcal Z}$ of $H$ satisfies $\rho_H({\mathcal Z})\ge 0$. Then Theorem \ref{THM: main} implies that either $H$ is $Z_3$-connected, or $H$ can be $Z_3$-reduced to one of the graphs $K_2, K_3, P_3, K_4$. As $G$ is $Z_3$-reduced, $H$ and any subgraph of $H$ are not $Z_3$-connected.  Note that $H$ contains no nontrivial $Z_3$-connected subgraph, and so the $Z_3$-reduction of $H$ is itself. So Theorem \ref{THM: main} implies that $H\in \{K_2, K_3, P_3, K_4\}$.

Notice also that for any partition ${\cal Y}$ of $H$, we have $\rho(H/{\mathcal Y})\ge \rho(H)$, since any  partition of $H/{\mathcal Y}$ can be obtained from a partition of $H$ by collapsing vertex sets in ${\mathcal Y}$ to become vertices.  Recall  $\rho(K_2)=6, \rho(K_3)=2, \rho(P_3)=0$, and $\rho(K_4)=0$. Hence for any nontrivial partition ${\cal Y}$ of $H$, $\rho(H/{\cal Y})\ge 7$ implies that $H/{\cal Y}\notin \{K_2, K_3, P_3, K_4\}$.

(i) Suppose to the contrary that $\rho_H({\mathcal Z})\ge 7$ for the trivial partition ${\cal Z}$ of $H$. Then  for any partition ${\cal Y}$ of $H$, $\rho(H/{\cal Y})\ge 7$ and $H/{\cal Y}\notin \{K_2, K_3, P_3, K_4\}$. This contradicts to $H\in \{K_2, K_3, P_3, K_4\}$.

(ii) We have that $\rho_H({\mathcal Z})\ge 3$ implies $H\notin \{K_3, P_3, K_4\}$, and so $H\cong K_2$.

(iii) We have that  $\rho_H({\mathcal Z})\ge 1$ implies $H\notin \{P_3, K_4\}$, and so  $H\in  \{K_2,K_3\}$.
\end{proof}

  \begin{claim}\label{rhop}
    Let ${\cal X}$ be a nontrivial partition of $G$. Then we have\\
    (i) $\rho_G({\cal X})\ge 6$;\\
    (ii) $\rho_G({\cal X})\ge 10$ if ${\cal X}$ is not a $K_2$-trivial partition;\\
    (iii) $\rho_G({\cal X})\ge 12$ if ${\cal X}$ is neither a $K_2$-trivial partition nor a $K_3$-trivial partition.
  \end{claim}

\begin{proof} Let $\mathcal{X}=\{X_1,X_2,\ldots, X_t\}$. If $t=1$, then it is easy to verify $\rho_G({\cal X})=12$. So we assume $t\geq2$ and  $|X_1|> 1$. Let $H=G[X_1]$.
For any partition ${\mathcal Y}=\{Y_1, Y_2,\dots, Y_s\}$ of $V(H)$,  ${\mathcal Y}\cup ({\mathcal X}\setminus\{X_1\})$ is a partition of $V(G)$, and we have
  \begin{eqnarray}\nonumber
&~& \rho_G({\mathcal Y}\cup ({\mathcal X}\setminus\{X_1\}))\\\nonumber
 &=&\sum_{j=1}^sd_G(Y_j) + \sum_{i=2}^td_G(X_i) -8(s+t-1)+20\\\nonumber
 &=&[\sum_{j=1}^sd_G(Y_j)-d_G(X_1)-8s+20] + [\sum_{i=1}^td_G(X_i) -8(t-1)]\\\nonumber
 &=&[\sum_{j=1}^sd_H(Y_j)-8s+20] + [\sum_{i=1}^td_G(X_j) -8t+20]-12\\\nonumber
 &=& \rho_H({\mathcal Y})+ \rho_G({\mathcal X})-12.
\end{eqnarray}

That is,
\begin{eqnarray}\label{EQ: rPrQ}
 \rho_H({\mathcal Y})= \rho_G({\mathcal Y}\cup ({\mathcal X}\setminus\{X_1\})) - \rho_G({\mathcal X})+12.
\end{eqnarray}

(i) Suppose to the contrary that $\rho_G({\mathcal X})\le 5$. Then for any partition ${\cal Y}$ of $H$, we have $\rho_H({\mathcal Y})\ge 7$  by Eq.(\ref{EQ: rPrQ}), contradicting to Claim \ref{rhoH}(i).

(ii) We first show that $\rho_G({\cal X})\ge 12$ if ${\cal X}$ is a partition with $|X_1|>1$ and $|X_2|>1$.
Suppose that $\rho_G({\mathcal X})\le 11$. Since $|X_2|>1$,  the partition ${\mathcal Y}\cup ({\mathcal X}\setminus\{X_1\})$ is  a nontrivial partition of $G$. So $\rho_G({\mathcal Y}\cup ({\mathcal X}\setminus\{X_1\}))\geq6$ by (i).
Then we have $\rho_H({\mathcal Y})\ge 7$  for any partition ${\cal Y}$ of $H$ by Eq.(\ref{EQ: rPrQ}), contradicting to Claim \ref{rhoH}(i).


Now suppose that $\rho_G({\mathcal X})\le 9$, $|X_1|\ge 2$ and $|X_i|= 1$ for each $i\in \{2,3,\ldots,t\}$.  By Eq.(\ref{EQ: rPrQ}) and (i), we have $\rho_H({\mathcal Y})\ge  0-9+12=3$  for any  partition ${\cal Y}$ of $H$, and furthermore, $\rho_H({\mathcal Y}_1)\ge 6-9+12=9$ for any  nontrivial partition ${\cal Y}_1$ of $H$.  Thus $H\cong K_2$ by  Claim \ref{rhoH}(ii).


(iii) Suppose that $\rho_G({\mathcal X})\le 11$, $|X_1|\ge 2$ and $|X_i|= 1$ for each $i\in \{2,3,\ldots,t\}$.  By Eq.(\ref{EQ: rPrQ}) and $(i)$, we have $\rho_H({\mathcal Y})\ge 0-11+12= 1$ for any  partition ${\cal Y}$ of $H$, and moreover, $\rho_H({\mathcal Y}_1)\ge 6-11+12= 7$  for any  nontrivial partition ${\cal Y}_1$ of $H$.  Thus  $H\in \{K_2, K_3\}$ by  Claim \ref{rhoH}(iii).
%
\end{proof}

\begin{claim}\label{CL: ess7}
 For any nonempty vertex subset $S\subsetneq V(G)$,\\
 (i) $d(S)\ge 4$. That is,  $G$ is $4$-edge-connected.\\
 (ii) If $|S|\ge 2$ and $|S^c|\ge 3$, then $d(S)\ge 7$. That is,  $G$ is essentially  $7$-edge-connected.
\end{claim}

\begin{proof}
It is obvious that  $\{S,S^c\}$ is a partition of $V(G)$.

 (i) Since $|V(G)|\geq 7$,  by Claim \ref{rhop}, we have that $12\leq \rho_G(\{S,S^c\})=2|[S,S^c]|-16+20$, which yields $|[S,S^c]|\geq4$. This implies that $G$ is $4$-edge-connected.

(ii) It is sufficient to prove that if $|S|\ge 2$ and $|S^c|\ge 3$, then  $\rho_G(\{S,S^c\} )\geq18$. It is clear that if  $\rho_G(\{S,S^c\} )\geq18$, then  we have $|[S,S^c]|\geq7$ by $\rho_G(\{S,S^c\})=2|[S,S^c]|-16+20$. Now let us prove $ \rho_G(\{S,S^c\} )\geq18$. By contradiction, suppose $\rho_G(\{S,S^c\} )\leq 17$.
Since $|V(G)|\geq 7$, by symmetry, we assume $|S^c|\ge 4$. Let $H=G[S]$. By Eq.(\ref{EQ: rPrQ}) and Claim \ref{rhop}, we have  $\rho_H({\mathcal Y})\ge 12-17+12=7$  for any partition ${\cal Y}$ of $H$, a contradiction to Claim \ref{rhoH}(i).
\end{proof}

Next we introduce a few more tools in order to complete the proof of Theorem \ref{THM: main}.
%
Some splitting operation could keep the resulting graph $Z_3$-connected as follows.
\begin{lemma}(Lemma 4.1 of \cite{HanLL16})\label{lem2}
Let $G$ be a graph, $z$ be a vertex of $G$ with degree at least $4$ and ${zv_1,zv_2}\in E_G(z)$. If $G'=G-z+v_1v_2$ is $Z_3$-connected, then $G$ is $Z_3$-connected.
\end{lemma}
Another key result is the following theorem due to Lov\'asz, Thomassen, Wu and Zhang \cite{LTWZ13}.
\begin{theorem}\label{thmLTWZ} {\em (Lov\'asz et al. \cite{LTWZ13})}
  Every $6$-edge-connected graph is $Z_3$-connected.
\end{theorem}

%

Now we are ready to finish the proof. By Claim \ref{CL: ess7}(ii), each nontrivial edge cut of $G$ has size at least $7$. But $G$ is not $6$-edge-connected by Theorem \ref{thmLTWZ}. Hence the minimal degree of $G$ is at most $5$. Let $z$ be a minimal degree vertex of $G$. Then   by Claim \ref{CL: ess7}(i) we have $$4\leq d_G(z)\leq5.$$
Our main strategy below is to show that by  Claim \ref{CL: ess7} it is always possible to  select ${zv_1,zv_2}\in E_G(z)$ such that the modified graph $G'=G-z+v_1v_2$ still satisfies the condition of Theorem \ref{THM: main}. Then the minimality of $G$ and Theorem \ref{THM: main} would imply that $G'$ is $Z_3$-connected. Hence, $G$ is  $Z_3$-connected by Lemma \ref{lem2}, a contradiction to Claim \ref{CL: noZ3subgraph}.

\begin{claim}\label{zdegree4}
  If $d_G(z)=4$, then $G$ is $Z_3$-connected.
\end{claim}
\begin{proof}
Since $G$ is $Z_3$-reduced by Claim \ref{CL: noZ3subgraph}, $G$ contains no parallel edges. So $z$ has four neighbors, and we may let $N_G(z)=\{v_1,v_2,v_3,v_4\}$. Denote $G'=G-z+v_1v_2$.

We first claim that $G'$ is still $4$-edge-connected. Since $G$ is essential $7$-edge-connected by Claim \ref{CL: ess7}(ii), $z$ is not adjacent to any $4$-vertex. So $d_G(v_i)\ge 5$ for each $i\in\{1,2,3,4\}$. Hence $G'=G-z+v_1v_2$ has minimal degree at least $4$. Suppose to the contrary that there is a subset $S\subset V(G')$ such that $|[S,S^c]_{G'}|\leq 3$. Note that $|S|\geq2$ and $|S^c|\geq2$. We may, WLOG, assume that $v_1\in S$. Let $T=S\cup\{z\}$. Then $|[T,T^c]_G|\le |[S,S^c]_{G'}|+(d_G(z)-1)\leq 6$ since $v_1z\notin [T,T^c]_G$.  This  contradicts to Claim \ref{CL: ess7}(ii) that $G$ is essential $7$-edge-connected. Hence, $G'$ is still $4$-edge-connected.

Next, we claim that $\rho(G')\ge 0$.  For any  partition ${\cal X'}=\{X_1',X_2',\ldots, X_t'\}$ of $G'$, we can obtain a partition  ${\cal X}=\{z\}\cup {\cal X'}$ of $G$. There are at most four edges in $E_G(z)$ that is counted in $\rho_G({\cal X})$ but no in $\rho_{G'}({\cal X'})$. Hence we have $0\leq\rho_G({\mathcal X})=\sum_{i=1}^{t}d_G(X'_i)+d_G(z)-8(t+1)+20\leq (\sum_{i=1}^{t}d_{G'}(X'_i)+4)+4-8t-8+20=\rho_{G'}({\mathcal X'})$. Thus $\rho_{G'}({\mathcal X'})\ge 0$ for any partition ${\cal X'}$ of $G'$.

Now the minimality of $G$ implies that Theorem \ref{THM: main} is applied for $G'$. Thus either $G'$ is $Z_3$-connected, or there is a partition ${\cal Y}$ of $G'$ such that $G'/{\cal Y}\in \{K_2, K_3, P_3, K_4\}$.  But the latter case cannot happen since $G'$ is  $4$-edge-connected. Hence $G'$ is $Z_3$-connected, and so $G$ is $Z_3$-connected by Lemma \ref{lem2}.
\end{proof}

By Claims \ref{CL: noZ3subgraph}, \ref{CL: ess7}, \ref{zdegree4}, we conclude that $G$ is $5$-edge-connected. Then we present the last claim to get a contradiction.
\begin{claim}\label{zdegree5}
  If $d(z)=5$, then $G$ is $Z_3$-connected.
\end{claim}
\begin{proof}
 Similar as the proof of Claim \ref{zdegree4}, $G$ contains no parallel edges and we may let $v_1,v_2\in N_G(z)$ be distinct neighbors of $z$. Let $G'=G-z+v_1v_2$.
Then the minimum degree of $G'=G-z+v_1v_2$ is at least $4$ since $G$ is $5$-edge-connected.

Now we prove that $G'$ is  $4$-edge-connected. Suppose, by contradiction, that there is a subset $S\subset V(G')$ such that $|[S,S^c]_{G'}|\leq 3$  in $G'$. Note that $|S|\geq2$ and $|S^c|\geq2$. As $|N_G(z)|=5$, we may, by symmetry, assume that $S$ contains at least $2$ neighbors of $z$ in $G$. Let $T=S\cup\{z\}$. Then $[T,T^c]_G$ contains at most $3$ edges in $E_G(z)$ that may not in $[S,S^c]_{G'}$, and so $|[T,T^c]_G|\le |[S,S^c]_{G'}|+3\leq 6$,  which contradicts to Claim \ref{CL: ess7}(ii).

Then we show that $\rho_{G'}({\mathcal X'})\ge 0$ for any partition ${\cal X'}$ of $G'$.
  For any  partition ${\cal X'}=\{X_1',X_2',\ldots, X_t'\}$ of $G'$, we can obtain a partition  ${\cal X}=\{z\}\cup {\cal X'}$ of $G$.
  For the trivial partition ${\cal Z'}$ of $G'$ and the trivial partition ${\cal Z}$ of $G$, each edge in $E(G)\cup\{v_1v_2\}\setminus E_G(z)$ is counted in $\rho_{G'}({\cal Z'})$, and so we have $\rho_{G'}({\cal Z'})=\rho_{G}({\cal Z})+2+8-2d_G(z)\ge 0$. For any nontrivial partition ${\cal X'}$ of $G'$, ${\cal X}=\{z\}\cup {\cal X'}$ is a nontrivial partition of $G$, and by Claim \ref{rhop}, we have $6\leq\rho_G({\mathcal X})=\sum_{i=1}^{t}d_G(X'_i)+d_G(z)-8(t+1)+20\leq (\sum_{i=1}^{t}d_{G'}(X'_i)+5)+5-8t-8+20=\rho_{G'}({\mathcal X'})+2$, which implies $\rho_{G'}({\cal X'})\ge 4$. Hence we conclude that $\rho(G')\ge 0$.

Since $G$ is a minimal counterexample, applying Theorem \ref{THM: main} to $G'$, we have that either $G'$ is $Z_3$-connected, or there is a partition ${\cal Y}$ of $G'$ such that $G'/{\cal Y}\in \{K_2, K_3, P_3, K_4\}$. But we know $G'/{\cal X'}\notin \{K_2, K_3, P_3, K_4\}$ for any partition ${\cal X'}$ of $G'$, since $G'$ is $4$-edge-connected. Thus $G'$ is $Z_3$-connected, and hence $G$ is $Z_3$-connected by Lemma \ref{lem2}.
%
\end{proof}

With Claims \ref{CL: noZ3subgraph}, \ref{zdegree4}, and \ref{zdegree5}, we obtain a contradiction.
This completes the proof of Theorem \ref{THM: main}.
\end{proof}

If a graph  contains $4$ edge-disjoint spanning trees, then $\rho(G)\ge 12$ and $G/{\cal X}\notin \{K_2, K_3,$ $P_3, K_4\}$ for any partition ${\cal X}$ of $G$, and so $G$ is $Z_3$-connected by Theorem \ref{THM: main}. This reproves the main result in \cite{HanLL16} below. Actually, Theorem \ref{THM: main} is an improvement of the result in \cite{HanLL16}.
\begin{corollary}\label{coro4trees} \cite{HanLL16}
  Every graph with four edge-disjoint spanning trees is $Z_3$-connected.
\end{corollary}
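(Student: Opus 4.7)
The plan is to verify that the two hypotheses of Theorem \ref{THM: main} are automatically satisfied for any graph $G$ with four edge-disjoint spanning trees, namely that $\rho(G)\ge 0$ and that $G$ cannot be $Z_3$-reduced to any graph in $\{K_2, K_3, P_3, K_4\}$. The key observation driving both verifications is that edge-disjoint spanning trees are preserved under contraction: if $G$ contains four edge-disjoint spanning trees and $\mathcal{X}=\{X_1,\dots,X_t\}$ is any partition of $V(G)$, then contracting each $X_i$ to a single vertex sends the four spanning trees of $G$ to four spanning connected subgraphs of $G/\mathcal{X}$, each of which contains a spanning tree of $G/\mathcal{X}$. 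Hence $G/\mathcal{X}$ also has four edge-disjoint spanning trees.

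Given this, the lower bound on $\rho_G(\mathcal{X})$ follows from an edge count. Since $G/\mathcal{X}$ has $t$ vertices and admits four edge-disjoint spanning trees, it contains at least $4(t-1)$ edges. Noting $|E(G/\mathcal{X})|=\tfrac{1}{2}\sum_{i=1}^{t} d_G(X_i)$, we obtain
$$\sum_{i=1}^{t} d_G(X_i) \;\ge\; 8(t-1),$$
and consequently
$$\rho_G(\mathcal{X}) \;=\; \sum_{i=1}^{t} d_G(X_i) - 8t + 20 \;\ge\; 8t - 8 - 8t + 20 \;=\; 12.$$
Taking the minimum over all partitions yields $\rho(G)\ge 12\ge 0$, which fulfills the hypothesis of Theorem \ref{THM: main}.

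Next, to exclude option (ii) of Theorem \ref{THM: main}, observe that if $G$ were $Z_3$-reduced to some $H\in\{K_2,K_3,P_3,K_4\}$, then there would be a partition $\mathcal{X}$ with $G/\mathcal{X}=H$, and then $H$ itself would have to contain four edge-disjoint spanning trees. But a graph on $t$ vertices with four edge-disjoint spanning trees has at least $4(t-1)$ edges, which requires at least $4, 8, 8, 12$ edges when $t=2,3,3,4$ respectively; none of $K_2$ ($1$ edge), $K_3$ ($3$ edges), $P_3$ ($2$ edges) or $K_4$ ($6$ edges) meets this threshold. Hence $G/\mathcal{X}\notin\{K_2,K_3,P_3,K_4\}$ for every partition $\mathcal{X}$, and option (ii) is ruled out.

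Combining the two verifications, Theorem \ref{THM: main} forces option (i), that is, $G$ is $Z_3$-connected. There is really no serious obstacle here: the entire argument is a straightforward two-line application of the contraction-preservation of spanning tree packing, and the only mild subtlety is making sure to observe that the small obstruction graphs $K_2,K_3,P_3,K_4$ are ruled out by the same edge-count that gives $\rho(G)\ge 12$.
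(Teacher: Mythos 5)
Your proposal is correct and follows the same route as the paper: the paper's (very terse) justification is precisely that four edge-disjoint spanning trees give $\rho(G)\ge 12$ and rule out $G/\mathcal{X}\in\{K_2,K_3,P_3,K_4\}$ for every partition $\mathcal{X}$, after which Theorem \ref{THM: main} yields $Z_3$-connectedness. You have simply filled in the details the paper leaves implicit, namely that tree packings pass to contractions and the resulting edge count $\sum_i d_G(X_i)\ge 8(t-1)$.
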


We also need the following Nash-Williams Arboricity Theorem \cite{Nash64}.
\begin{theorem}\label{thm-nsthm} {\em (Nash-Williams Arboricity Theorem \cite{Nash64})}
  A graph $G$ decomposes  into $k$ forests if and only if every  subgraph $H$ of $G$ satisfies $|E(H)|\le k(|V(H)|-1)$.
\end{theorem}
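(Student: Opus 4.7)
My approach splits into the easy necessity direction and the harder sufficiency direction, with the latter handled by Edmonds' matroid union theorem.

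For necessity, assume $E(G) = F_1 \sqcup \cdots \sqcup F_k$ with each $F_i$ a forest. For any subgraph $H$, the restriction $F_i \cap E(H)$ is still a forest spanning at most $|V(H)|$ vertices, hence contains at most $|V(H)|-1$ edges. Summing over $i$ yields $|E(H)| \le k(|V(H)|-1)$, so the density condition is necessary.

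For sufficiency, I would pass through the graphic matroid $M(G)$, whose rank function is $r(A) = |V(A)| - c(A)$ on an edge subset $A$, where $c(A)$ denotes the number of components of the subgraph with edge set $A$. A subset of $E(G)$ decomposes into $k$ forests precisely when it is independent in the $k$-fold matroid union $M(G) \vee \cdots \vee M(G)$. By Edmonds' matroid union theorem, the rank of $E(G)$ in this union equals
\begin{equation*}
\min_{A \subseteq E(G)} \bigl(|E(G) \setminus A| + k\, r(A)\bigr).
\end{equation*}
Hence $G$ decomposes into $k$ forests iff $|A| \le k(|V(A)| - c(A))$ for every $A \subseteq E(G)$. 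Splitting such an $A$ into the connected components of its induced subgraph reduces this inequality to the hypothesis $|E(H)| \le k(|V(H)|-1)$ applied on each connected piece, completing the argument.

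The main obstacle is if one wants a self-contained proof that avoids invoking matroid union as a black box. In that case I would fix a partition $E(G) = E_1 \cup \cdots \cup E_k$ minimizing the total cyclomatic number $\sum_i \bigl(|E_i| - |V(E_i)| + c(E_i)\bigr)$, pick an edge $e = uv$ lying on a cycle inside some $E_i$, and attempt to swap it into another class. If no single swap decreases the total, I would iteratively build the set of edges reachable from $e$ through forced exchanges across all $E_j$'s (moving $e$ into $E_j$ is blocked only if $u,v$ already lie in a common tree-component of $E_j$, whose edges then become reachable in turn). One then shows the final reachable set spans a connected subgraph $H$ whose edge count strictly exceeds $k(|V(H)|-1)$. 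The delicate step is formalizing this reachability relation across classes so that the offending subgraph emerges cleanly — in effect reproving matroid union inside the graphic setting.
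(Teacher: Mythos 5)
Your statement is one the paper does not prove at all: it is quoted as the classical Nash--Williams Arboricity Theorem with only the citation to Nash--Williams (1964), and is then used as a black box in the proof of the upper bound of Theorem \ref{THM: main4n}. So there is no in-paper argument to compare against; I can only assess your proof on its own terms, and it is correct. The necessity direction is the standard count (each forest meets a subgraph $H$ in at most $|V(H)|-1$ edges). For sufficiency, your reduction through the graphic matroid is sound: $E(G)$ partitions into $k$ forests iff it is independent in the $k$-fold union $M(G)\vee\cdots\vee M(G)$, Edmonds' rank formula turns this into the condition $|A|\le k\bigl(|V(A)|-c(A)\bigr)$ for all $A\subseteq E(G)$, and splitting $A$ into the components of the subgraph it spans reduces that condition to the stated hypothesis applied componentwise (each component, having no isolated vertices, is a subgraph $H$ of $G$ with $|A_j|\le k(|V_j|-1)$). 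This differs from Nash--Williams's original route, which was a direct graph-theoretic exchange/induction argument rather than an appeal to matroid union; your approach buys brevity at the cost of invoking Edmonds' theorem, which historically is of the same depth as the result being proved. Your sketched ``self-contained'' alternative via minimizing the total cyclomatic number and chasing forced exchanges is indeed the standard way to avoid that black box, but as written it is only a sketch; since the matroid-union argument is already complete, this does not create a gap.
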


{\bf Proof of Upper bound of Theorem \ref{THM: main4n} using Theorem \ref{THM: main}:} Let $G$ be a $3$-flow-critical graph on $n\ge 8$ vertices. By Theorem \ref{THM: prop3f} (iii), $G$ is $Z_3$-reduced. By Corollary \ref{coro4trees}, $G$ and each nontrivial subgraph of $G$ contains no $4$ edge-disjoint spanning trees. We further claim that every subgraph $H$ of $G$ satisfies $|E(H)|\le 4(|V(H)|-1)$ below. Suppose not, and let $H$ be a subgraph of $G$ satisfying $|E(H)|> 4(|V(H)|-1)$ with $|V(H)|$ minimized. Clearly, $|V(H)|\ge 4$ since $H$ is $Z_3$-reduced. For the trivial partition ${\cal Z}$ of $H$, $\rho_H({\cal Z})=2|E(H)|-8|V(H)|+20>12$. We show  $\rho(H)\ge 12$ to get a contradiction. Let $\mathcal{X}=\{X_1,X_2,\ldots, X_t\}$ be a partition of $H$ with the size of $\rho_H({\mathcal X})$ as small as possible. By contradiction, suppose  $\rho_H({\mathcal X})<12$. Then $\mathcal{X}$ is a nontrivial partition of $H$, and WLOG, let $|X_1|>1$ and denote $\Gamma=H[X_1]$. By a calculation same to that of  Eq.(\ref{EQ: rPrQ}), for any partition ${\cal Y}$ of $\Gamma$, we have $\rho_{\Gamma}({\mathcal Y})= \rho_H({\mathcal Y}\cup ({\mathcal X}\setminus\{X_1\})) - \rho_H({\mathcal X})+12\ge 12$, since $\rho_H({\mathcal Y}\cup ({\mathcal X}\setminus\{X_1\})) \ge \rho_H({\mathcal X})$ by the minimality of $\rho_H({\mathcal X})$. Hence $\rho(\Gamma)\ge 12$ and the subgraph $\Gamma$ is $Z_3$-connected by Theorem \ref{THM: main}, contradicting that $G$ is $Z_3$-reduced. Thus  $\rho_H({\mathcal X})\ge 12$ for any partition of $H$, and so $H$ is $Z_3$-connected by Theorem \ref{THM: main}, a contradiction again. This contradiction implies the claim that every subgraph $H$ of $G$ satisfies $|E(H)|\le 4(|V(H)|-1)$.
By Nash-Williams Arboricity Theorem (Theorem \ref{thm-nsthm}), $G$ can be decomposable into $4$ edge-disjoint forests, say $F_1,F_2,F_3,F_4$.

Now suppose that $|E(G)|\ge 4n-10$. Then $4n-5\geq|E(F_1)|+|E(F_2)|+|E(F_3)|+|E(F_4)|=|E(G)|\ge 4n-10$.   Let $T'_i$ be a tree obtained by adding some edges to $F_i$ for each $i\in\{1,2,3,4\}$. Let $G'$ be the edge-disjoint union of  trees $T'_1, T'_2, T'_3, T'_4$. Then we have that $|E(G')|=|E(G)|+j$, where $1\leq j\leq 6$. For any  partition ${\cal X}=\{X_1,X_2,\ldots, X_t\}$ of $G$, we have that $12\leq\rho_{G'}({\mathcal X})=\sum_{i=1}^{t}d_{G'}(X_i)-8t+20\leq (\sum_{i=1}^{t}d_G(X_i)+2j)-8t+20=\rho_{G}({\mathcal X})+2j\leq \rho_{G}({\mathcal X})+12$. So $\rho(G)\geq0$.
 Now by Theorem \ref{THM: main}, either  $G$ is $Z_3$-connected, or $G$ can be $Z_3$-reduced to one of the graphs $K_2, K_3, P_3, K_4$. Since $G$ is $3$-flow-critical, then $G$ is not $Z_3$-connected. Since $G$ is $3$-edge-connected,  $G$ cannot be $Z_3$-reduced to any one of the graphs $K_2, K_3,$ or $P_3$.  Then we have $G\cong K_4$ since $G$ is essentially $4$-edge-connected.
Hence,  $G$ has at most $4n-11$ edges unless $G\cong K_4$.
\ENDproof

~

{\bf Proof of Theorem \ref{THM: 9n8}} By way of contradiction, we suppose $|E(G)|\geq \frac{5n}{2}+9n_8$. If $n_8\geq\frac{n}{6}$, then  $|E(G)|\geq \frac{5n}{2}+\frac{9n}{6}=4n$, which contradicts to Theorem \ref{THM: main4n}. So we have $n_8<\frac{n}{6}$. Since $\delta(G)\geq3$, we have $2|E(G)|=\sum_{v\in V(G)}d(v)\geq 3n_8+9(n-n_8)=9n-6n_8>8n$, still a contradiction to Theorem \ref{THM: main4n}.
\ENDproof

\section{Construction of $3$-flow-critical graphs}
Yao and Zhou \cite{YZ17} proved that for each positive integer $k$, there exists a $4$-critical planar graph with $6k+7$ vertices and $14k+12$ edges.  By duality, their theorem shows the following result on $3$-flow-critical planar graphs.

\begin{theorem} (Yao and Zhou \cite{YZ17})\label{YZthm}
  For each positive integer $k$, there exists a $3$-flow-critical planar graph with $8k+7$ vertices and $14k+12$ edges.
  \end{theorem}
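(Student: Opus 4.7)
The plan is to obtain the $3$-flow-critical planar graphs as planar duals of Yao and Zhou's $4$-critical planar graphs, and then to verify the vertex and edge counts via Euler's formula. So first I would invoke Yao and Zhou's construction, which supplies, for every positive integer $k$, a $4$-critical planar graph $H_k$ with $|V(H_k)| = 6k+7$ and $|E(H_k)| = 14k+12$. Fix any planar embedding of $H_k$ and let $G_k := H_k^{*}$ be its planar dual.

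Next I would translate $4$-criticality of $H_k$ into $3$-flow-criticality of $G_k$ via Tutte's flow-coloring duality, which is already recalled in the paper's introduction. The standard identities $H_k^{*}/e^{*} = (H_k - e)^{*}$ and $H_k^{*} - e^{*} = (H_k/e)^{*}$ translate edge deletions in the primal to edge contractions in the dual, and vice versa; since $H_k$ is $4$-critical (so loopless) its dual $G_k$ is bridgeless, and by duality $G_k$ admits no nowhere-zero $3$-flow while $G_k/e^{*}$ does for every edge. Hence $G_k$ is $3$-flow-critical.

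Then I would do the counting. The dual graph satisfies $|E(G_k)| = |E(H_k)| = 14k+12$, and its vertices correspond to the faces of $H_k$. Applying Euler's formula to $H_k$,
\[
|V(H_k)| - |E(H_k)| + f(H_k) = 2,
\]
gives $f(H_k) = 14k+12 - (6k+7) + 2 = 8k+7$, so $|V(G_k)| = 8k+7$, as required.

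The only delicate point — which is really just bookkeeping rather than a true obstacle — is confirming that the flow–coloring duality transfers criticality edgewise; this relies on the aforementioned contraction/deletion correspondence together with the fact that $H_k$, being $4$-critical, is loopless, and $G_k$ is therefore bridgeless so that the definition of $3$-flow-critical applies cleanly. All remaining arithmetic is routine.
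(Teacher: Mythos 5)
Your proposal is correct and follows essentially the same route as the paper, which simply cites Yao and Zhou's $4$-critical planar graphs on $6k+7$ vertices with $14k+12$ edges and passes to the planar dual via Tutte's flow--coloring duality; your extra care with the deletion/contraction correspondence and the Euler-formula count $f = 14k+12-(6k+7)+2 = 8k+7$ just makes explicit what the paper leaves as "by duality."
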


\begin{definition}
Let $G_1$ be a graph with edge $e_1=u_1v_1$, and $G_2$ be a graph with edge $e_2=u_2v_2$.
Let $G_1 \oplus_{(e_1,e_2)} G_2$ be a graph which is obtained from the disjoint union of $G_1-e_1$ and $G_2-e_2$ by identifying $u_1$ and $u_2$ to form a vertex $u$, identifying $v_1$ and $v_2$ to form a vertex $v$, and adding a new edge $uv$.
\end{definition}

\begin{lemma}\label{2sum}
If $G_1$ and $G_2$ are both $3$-flow-critical  graphs with $e_1\in E(G_1)$ and $e_2\in E(G_2)$, then $G_1 \oplus_{(e_1,e_2)} G_2$ is a $3$-flow-critical  graph.
\end{lemma}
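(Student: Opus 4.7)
Write $G := G_1 \oplus_{(e_1,e_2)} G_2$, with $e_i = u_i v_i$, and let $u, v$ be the identified vertices, so $E(G) = (E(G_1)\setminus\{e_1\}) \cup (E(G_2)\setminus\{e_2\}) \cup \{uv\}$. The plan is to verify the two properties defining $3$-flow-criticality: first that $G$ admits no modulo $3$-orientation, and then that $G/e$ admits one for every $e \in E(G)$. The driving observation is that, by Theorem \ref{THM: prop3f}(i), each $G_i - e_i$ admits a modulo $3$-orientation; but by the $3$-flow-criticality of $G_i$, no such orientation can have imbalance $(\pm 1, \mp 1)$ at $(u_i, v_i)$, since that would extend to a modulo $3$-orientation of $G_i$ by orienting $e_i$ in the direction cancelling these imbalances.

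For the first property, I will suppose a modulo $3$-orientation $D$ of $G$ exists and derive a contradiction. Let $(s_i, t_i)$ be the imbalances at $(u_i, v_i)$ when $D$ is restricted to $G_i - e_i$. Summing imbalances over the balanced interior vertices of $G_i - e_i$ gives $s_i + t_i \equiv 0 \pmod 3$. Writing $c = \pm 1$ for the orientation of $uv$, balance at $u$ in $G$ forces $s_1 + s_2 + c \equiv 0 \pmod 3$, so $s_1 + s_2 \equiv -c \not\equiv 0 \pmod 3$. Hence at least one of $s_1, s_2$ is nonzero mod $3$; WLOG $s_1 \not\equiv 0$, so $(s_1, t_1) \in \{(1,-1),(-1,1)\} \pmod 3$, contradicting the driving observation applied to $G_1$.

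For the second property I split on the location of $e$. If $e = uv$, then $G/uv$ is the one-vertex union of $G_1/e_1$ and $G_2/e_2$ at the merged vertex; both $G_i/e_i$ admit modulo $3$-orientations by criticality, and their combined orientation is a modulo $3$-orientation of $G/uv$ (the shared vertex receives balance $0 + 0 \equiv 0$). If $e \in E(G_1 - e_1)$, then $G/e \cong (G_1/e) \oplus_{(e_1,e_2)} G_2$; pick a modulo $3$-orientation $D_1$ of $G_1/e$ (which exists by criticality of $G_1$ since $e \neq e_1$), WLOG directing $e_1$ from its $u_1$-endpoint to its $v_1$-endpoint (where in $G_1/e$ these ``endpoints'' may be contracted vertices). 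Then $D_1$ restricted to $G_1/e - e_1$ has imbalances $(-1, +1)$ at these endpoints. Combine this with a modulo $3$-orientation of $G_2 - e_2$ (having imbalances $(0,0)$ at $(u_2,v_2)$, by Theorem \ref{THM: prop3f}(i) applied to $G_2$), and direct $uv$ as $u \to v$: at $u$ the imbalance becomes $-1 + 0 + 1 \equiv 0 \pmod 3$, at $v$ it becomes $+1 + 0 - 1 \equiv 0 \pmod 3$, and every other vertex sits inside a balanced piece. The case $e \in E(G_2 - e_2)$ is symmetric.

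The only real bookkeeping care is in Case 2 when $e$ is incident to $u_1$ or $v_1$: the ``$u_1$-endpoint of $e_1$'' in $G_1/e$ is then a merged vertex that is subsequently identified with $u_2$ to form $u$ in the $\oplus$-construction. Once the identification $G/e \cong (G_1/e) \oplus_{(e_1,e_2)} G_2$ is set up carefully, the imbalance computation is insensitive to this collapse, so I do not anticipate a serious obstacle beyond making these identifications explicit.
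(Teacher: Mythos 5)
Your proposal is correct and takes essentially the same route as the paper: a boundary-imbalance count at the identified vertices $u,v$ shows that a modulo $3$-orientation of $G_1 \oplus_{(e_1,e_2)} G_2$ would restrict to an orientation extendable (through $e_1$ or $e_2$) to a modulo $3$-orientation of $G_1$ or $G_2$, and criticality is then verified by gluing orientations of the two pieces. The only cosmetic difference is that the paper reduces the second part to showing $G-e$ has a modulo $3$-orientation via Theorem~\ref{THM: prop3f}(i), thereby avoiding the contraction bookkeeping you address at the end, while your variant—using a modulo $3$-orientation of $G_1/e$ and letting the new edge $uv$ cancel the resulting $\pm 1$ imbalances—is equally valid.
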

\begin{proof}
First, we show that $G_1 \oplus_{(e_1,e_2)} G_2$ has no modulo $3$-orientation. To the contrary, we suppose $G_1 \oplus_{(e_1,e_2)} G_2$ has a modulo $3$-orientation $D$ with $v\rightarrow u$. Let $D_i$ be the restriction of $D$ on $G_i$ for each $i\in\{1,2\}$. Denote $d^+_{D_i}(u_i)-d^-_{D_i}(u_i) \equiv a_i\pmod 3$ and $d^+_{D_i}(v_i)-d^-_{D_i}(v_i) \equiv b_i\pmod 3$. Then we have $a_1+a_2+1 \equiv 0 \pmod 3$ since $u$ is balanced in $D$, and $a_i+b_i \equiv 0 \pmod 3$ since every vertex, except perhaps $u_i$ and $v_i$, is balanced in $D_i$.
If $a_1=0$, then $b_1=0$ and $D_1$ is a modulo $3$-orientation of $G_1$, a contradiction.
If $a_1=1$, then $b_1=2$. And we can obtain a modulo $3$-orientation of $G_1$ by reversing the direction of the arc $v_1u_1$ in $D_1$, a contradiction.
If $a_1=2$, then $a_2=0$ and $b_2=0$, and so $D_2$ is a modulo $3$-orientation of $G_2$, a contradiction again.

Then it suffices to show that  $G_1 \oplus_{(e_1,e_2)} G_2-e$ has a modulo $3$-orientation  for each edge $e$ in $G_1 \oplus_{(e_1,e_2)} G_2$. Recall that $G_i-e'$ has a modulo $3$-orientation for each $e'\in E(G_i)$ by Theorem \ref{THM: prop3f} (i).
If $e=uv$, then the union of the modulo $3$-orientations of $G_i-u_iv_i$ is a modulo $3$-orientation of $G_1 \oplus_{(e_1,e_2)} G_2-e$.
If $e\in E(G_1)$ and $e\neq uv$, then the union of the modulo $3$-orientations of $G_1-e$ and $G_2-u_2v_2$ is a modulo $3$-orientation of $G_1 \oplus_{(e_1,e_2)} G_2-e$.  If $e\in E(G_2)$ and $e\neq uv$, then we can also find a modulo $3$-orientation of $G_1 \oplus_{(e_1,e_2)} G_2-e$ by symmetric argument. This proves that $G_1 \oplus_{(e_1,e_2)} G_2$ is a $3$-flow-critical  graph.
\end{proof}

Finally we apply Theorem \ref{YZthm} and Lemma \ref{2sum} to construct $3$-flow-critical graphs with density from $\frac{7}{4}$ up to $3$.

\begin{theorem}
  For any positive integer $N$ and any rational number $r$ with $\frac{7}{4}<r<3$, there exists a $3$-flow-critical graph $G$ on $n\ge N$ vertices with $$rn-\frac{5}{8}\le |E(G)|\le rn+\frac{5}{8}.$$
\end{theorem}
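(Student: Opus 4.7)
The plan is to construct $G$ via an iterated 2-sum of 3-flow-critical building blocks: the Yao--Zhou graphs $Y_k$ (with $8k+7$ vertices and $14k+12$ edges, from Theorem~\ref{YZthm}) and the graphs $K^+_{3,t-3}$ for $t\ge 6$ (with $t$ vertices and $3t-8$ edges, shown $3$-flow-critical in Proposition~\ref{k3c}). By Lemma~\ref{2sum}, any such iterated 2-sum is $3$-flow-critical.

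A routine induction shows that if $G$ is the 2-sum of $a$ copies of $Y_k$ and $b$ copies of $K^+_{3,t-3}$, then $n=a(8k+5)+b(t-2)+2$ and $|E(G)|=a(14k+11)+b(3t-9)+1$, so
\[
|E(G)| - rn \;=\; a\,\alpha_k + b\,\beta_t + (1-2r),
\]
where $\alpha_k := (14k+11)-r(8k+5)$ and $\beta_t := (3-r)(t-2)-3$. Since the density of $Y_k$ tends to $7/4<r$ while that of $K^+_{3,t-3}$ tends to $3>r$, we have $\alpha_k<0$ for all sufficiently large $k$ and $\beta_t>0$ for all sufficiently large $t$.

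The key step is to select $t\ge 6$ with $\beta_t\in(0,5/4)$. Since $r>7/4$ implies $3-r<5/4$, the set of real $t-2$ giving $\beta_t\in(0,5/4)$ is an interval of length $5/(4(3-r))>1$, which therefore contains an integer; for most $r$ this integer is $\ge 4$, so $t\ge 6$ is admissible. Fixing such $t$ and choosing $k$ large enough that $\alpha_k<0$, for each sufficiently large integer $a$ the number $(2r-1-a\alpha_k)/\beta_t$ is positive, so I would set $b$ to be the closest nonnegative integer to it; since successive values of $b\beta_t$ are spaced by $\beta_t<5/4$, this yields
\[
\bigl| |E(G)| - rn \bigr| \;\le\; \beta_t/2 \;<\; 5/8,
\]
and taking $a$ large ensures $n\ge N$.

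The main obstacle lies with the narrow subrange $r\in(7/4,31/16]$, where the only integer in the above interval is $t-2=3$, i.e., $t=5$, which violates the $t\ge 6$ restriction of Proposition~\ref{k3c} (while $t=6$ gives $\beta_6=9-4r\ge 5/4$). For these $r$ I would instead combine two Yao--Zhou graphs $Y_{k_1}, Y_{k_2}$ of different sizes together with copies of $K^+_{3,3}$: writing $r=p/q$ in lowest terms, the sequence $\{q\,\alpha_k\}_{k\ge 1}$ is an arithmetic progression in $k$ with common difference $-(8p-14q)$, whose magnitude $8q(r-7/4)$ is small when $r$ is close to $7/4$. Two consecutive terms supply step sizes small enough that, together with the positive contribution from $K^+_{3,3}$, the integer combinations of all three piece types cover the target interval of length $5/4$ densely enough; a B\'ezout/numerical-semigroup argument then produces nonnegative integer multiplicities satisfying $\bigl||E(G)|-rn\bigr|\le 5/8$ while allowing $n\ge N$ by scaling up.
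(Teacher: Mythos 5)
Your first construction is fine as far as it goes: the counting for the iterated $2$-sum is correct, and for any $r$ admitting an integer $t\ge 6$ with $\beta_t=(3-r)(t-2)-3\in(0,5/4]$ the rounding argument on $b$ does give $\bigl||E(G)|-rn\bigr|\le\beta_t/2\le 5/8$ with $n\ge N$. But, as you yourself note, such a $t$ exists only when $r>31/16$ (the interval for $t-2$ lies in $(2.4,\,17/(4(3-r)))$, so it contains an integer $\ge 4$ exactly when $r>31/16$), and your treatment of the remaining range $r\in(7/4,31/16]$ is where the genuine gap lies. The stated mechanism there is not correct as reasoned: the step between consecutive Yao--Zhou contributions is $\alpha_k-\alpha_{k+1}=8r-14$, which is \emph{not} small throughout that range (e.g.\ for $r=1.93$ it equals $1.44>5/4$), and $\beta_6=9-4r\ge 5/4$ there as well, so no single step size fits inside the target window of width $5/4$. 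One is then forced into a genuine integer-combination argument with the three coefficients $q\alpha_{k_1},q\alpha_{k_2},q\beta_6$ subject to \emph{nonnegative} multiplicities and the constraint $n\ge N$; this can in fact be pushed through (writing $r=p/q$, the relevant gcd $\gcd(11q-5p,\,8p-14q,\,9q-4p)$ divides $2$, well below the window length $5q/4$, and solutions can be shifted to keep all multiplicities nonnegative and $n$ large), but none of this is in your sketch --- ``cover the target interval densely enough'' and ``a B\'ezout/numerical-semigroup argument then produces nonnegative integer multiplicities'' is precisely the part that needs proof, and your heuristic for why it should work (small consecutive steps) fails on part of the range.

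The paper avoids this difficulty entirely, and it is worth seeing why: instead of fixing one block from each family and varying the \emph{number of copies}, it takes a single $2$-sum $G_1\oplus_{(e_1,e_2)}K^+_{3,t-3}$ of one Yao--Zhou graph (parameter $s$ large) with one $K^+_{3,t-3}$ and tunes $t$ itself. Each unit increase of $t$ changes $|E(G)|-rn$ by $3-r<5/4$, so the admissible real interval for $t$ has length $5/(4(3-r))>1$ and contains an integer; and because $s$ is large and $8r-14>0$, that integer is automatically large (in particular $\ge 6$), so the restriction of Proposition~\ref{k3c} never bites. In other words, the fine granularity should come from letting $t$ grow, not from keeping $t$ small so that a whole copy of $K^+_{3,t-3}$ is a small perturbation; adopting that single change would let you delete the problematic case entirely and handle all $r\in(7/4,3)$ uniformly.
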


\begin{proof}
Assume $r=\frac{q}{p}$, where $p,q$ are two positive integers. Note that Lemma \ref{2sum} provides a way to construct  $3$-flow-critical planar graphs from smaller graphs.
Now let $s\geq\frac{6(3p-q)}{8q-14p}+N$ and let $G_1$ be a $3$-flow-critical planar graph with $8s+7$ vertices and $14s+12$ edges as described in Theorem \ref{YZthm}.  Let $$a=\frac{1}{3p-q}((8q-14p)s+5q-3p-\frac{5p}{8})$$ and $$b=\frac{1}{3p-q}((8q-14p)s+5q-3p+\frac{5p}{8}).$$ Since $\frac{7}{4}<\frac{q}{p}<3$, we have $3p-q>0$, $8q-14p>0$ and $5q-3p-\frac{5p}{8}>0$.  So $s>N$ and $a>6$. Since $b-a=\frac{5p}{4(3p-q)}=\frac{5}{4(3-\frac{q}{p})}>1$, there exists  a positive integer  $t$ satisfying $a\leq t\leq b$. Let $G_2=K_{3, t-3}^+$ and let $G=G_1 \oplus_{(e_1,e_2)} G_2$, where $e_1\in E(G_1)$ and  $e_2\in E(G_2)$. Then $G$ is $3$-flow-critical by Lemma \ref{2sum}. By the construction of $G$,  the graph $G$ has $8s+7+t-2=8s+t+5$ vertices and $14s+12+3t-8-1=14s+3t+3$ edges. So $|V(G)|>N$. It is routine to verify that $rn-\frac{5}{8}\le |E(G)|\le rn+\frac{5}{8}$ as follows.

By $a\le t\le b$, we have
\begin{eqnarray*}
  rn+\frac{5}{8}-|E(G)|&=& \frac{q}{p}(8s+t+5)+\frac{5}{8} -(14s+3t+3)\\
  &=& \frac{1}{p}((8s+t+5)q+\frac{5p}{8}-(14s+3t+3)p)\\
  &=&\frac{1}{p}(((8q-14p)s+5q-3p+\frac{5p}{8})- (3p-q)t)\\
   &=&\frac{3p-q}{p}(b-t)\ge 0
\end{eqnarray*}
and
\begin{eqnarray*}
  |E(G)|-(rn-\frac{5}{8})&=&  (14s+3t+3)-\frac{q}{p}(8s+t+5)+\frac{5}{8}\\
  &=& \frac{1}{p}((14s+3t+3)p-(8s+t+5)q+\frac{5p}{8})\\
  &=&\frac{1}{p}( (3p-q)t-((8q-14p)s+5q-3p-\frac{5p}{8}))\\
  &=&\frac{3p-q}{p}(t-a)\ge 0.
\end{eqnarray*}

This completes the proof.
 \end{proof}
 
 \bigskip

\noindent {\bf Acknowledgements.}\  
Jiaao Li was partially supported by National Natural Science Foundation of China (No. 11901318) and Natural Science Foundation of Tianjin (No. 19JCQNJC14100).Yulai Ma and Yongtang Shi were partially supported by the National Natural Science Foundation of China (No. 11922112).
 Weifan Wang was partially supported by the National Natural Science Foundation of China (No. 11771402). Yezhou Wu was partially supported by the National Natural Science Foundation of China (No. 11871426).

\footnotesize

 \end{document}